\documentclass[12pt,a4paper,british]{amsart}
\usepackage{babel}

\usepackage{url}
\usepackage[all]{xy}
\usepackage{enumerate}
\usepackage{tikz}
\usepackage{tikz-cd}
\usetikzlibrary{arrows,decorations.markings}
\tikzset{commutative diagrams/.cd,arrow style=tikz,diagrams={>=stealth'}}

\setlength{\textwidth}{16.5cm}
\setlength{\textheight}{24cm}
\setlength{\oddsidemargin}{0cm}
\setlength{\evensidemargin}{0cm}
\setlength{\topmargin}{-0.5cm}

%###################################
%###################################
\newcommand{\rp}[1]{\mathbb{R}P^{#1}}
\newcommand{\stwo}{\mathbb{S}^{2}}
\newcommand{\N}{\mathbb{N}}

\newcommand{\res}[1]{\textit{#1}}

\newcommand{\topp}{\text{Top}}

\newcommand{\bz}{\mathbb{Z}}
\newcommand{\ra}{\longrightarrow}
\renewcommand{\to}{\longrightarrow}
\newcommand{\lra}{\longrightarrow}
\newcommand{\stac}[1]{\stackrel{#1}{\longrightarrow}}
\newcommand{\til}[1]{\widetilde{#1}}

%###################################
%###################################

\renewcommand{\ker}[1]{\ensuremath{\operatorname{\text{Ker}}(#1)}}
\newcommand{\im}[1]{\ensuremath{\operatorname{\text{Im}}(#1)}}
\newcommand{\id}{\ensuremath{\operatorname{\text{Id}}}}

\usepackage[%pdftex,                %
%implicit=false,
bookmarks=true,
%backref=page,%     % Signets
breaklinks=true,
bookmarksnumbered = true,%     % Signets numérotés
colorlinks= true,%     % Liens en couleur
urlcolor= green,
anchorcolor = yellow,
citecolor=blue,%  % Couleur des liens externes
%pdfborder= {0 0 0}%   % Style de bordure : ici, pas de bordure
]{hyperref}                   % Utilisation de HyperTeX

\newtheorem{theorem}{Theorem}[section]
\newtheorem{lemma}[theorem]{Lemma}
\newtheorem{proposition}[theorem]{Proposition}

\theoremstyle{definition}

\theoremstyle{remark}
\newtheorem{remark}[theorem]{Remark}

\numberwithin{equation}{section}

\AtBeginDocument{%
\abovedisplayskip=7pt plus 3pt minus 9pt
\abovedisplayshortskip=7pt plus 3pt
\belowdisplayskip=7pt plus 3pt minus 9pt
\belowdisplayshortskip=7pt plus 3pt minus 4pt}

%\usepackage[nodisplayskipstretch]{setspace}
%\setstretch{1.1}

\begin{document}

\title[Embeddings and (v)cd of braid and mapping class groups]{Embeddings and the (virtual) cohomological dimension of the braid and mapping class groups of surfaces}

%    Remove any unused author tags.
%    author one information
\author{Daciberg Lima Gon\c{c}alves}
\address{Departamento de Matem\'atica - IME-USP, Caixa Postal~66281~-~Ag.~Cidade de S\~ao Paulo, CEP:~05314-970 - S\~ao Paulo - SP - Brazil}
\curraddr{}
\email{dlgoncal@ime.usp.br}
\thanks{}

%    author two information
\author{John Guaschi}
\address{Normandie Univ., UNICAEN, CNRS, Laboratoire de Math\'ematiques Nicolas Oresme UMR CNRS~6139, 14000~Caen, France}
\curraddr{}
\email{john.guaschi@unicaen.fr}
\thanks{}

\author{Miguel Maldonado}
\address{Unidad Acad\'emica de Matem\'aticas, Universidad Aut\'onoma de Zacatecas, Calzada Solidaridad entronque Paseo a la Bufa, C.P.~98000, Zacatecas, Mexico}
\curraddr{}
\email{mmaldonado@matematicas.reduaz.mx}
\thanks{}

\subjclass[2010]{}

\keywords{Mapping class groups, surface braid groups, finite coverings, embeddings, (virtual) cohomological dimension}%Mapping class groups, braid groups, finite coverings, homeomorphism groups}

\date{\today}

%\dedicatory{}

\begin{abstract}
In this paper, we make use of the relations between the braid and mapping class groups of a compact, connected, non-orientable surface $N$ without boundary and those of its orientable double covering $S$ to study embeddings of these groups and their (virtual) cohomological dimensions. We first generalise results of~\cite{BirmanChillingworth72,GoncalvesGuaschi12} to show that the mapping class group $MCG(N;k)$ of $N$ relative to a $k$-point subset embeds in the mapping class group $MCG(S;2k)$ of $S$ relative to a $2k$-point subset. We then compute the cohomological dimension of the braid groups of all compact, connected aspherical surfaces without boundary, which generalises results of~\cite{GoncalvesGuaschi15}. Finally, if the genus of $N$ is greater than or equal to $2$, we give upper bounds for the virtual cohomological dimension of $MCG(N;k)$.
\end{abstract}

\maketitle
%
%\tableofcontents

%#########################
%#########################
\section{Introduction}\label{sec1}

Let $S$ be a compact, connected surface, and let $X=\{ x_{1},\ldots,x_{k} \}$ be a finite (possibly empty) subset of $S$ of cardinality $k\geq 0$. Let $\topp(S;X)$ denote the group of homeomorphisms of $S$ for the operation of composition that leave $X$ invariant. If $S$ is orientable, let $\topp^+(S;X)$ denote the set of elements of $\topp(S;X)$ that preserve orientation. Note that $\topp^+(S;X)$ is a subgroup of $\topp(S;X)$ of index two. We define the \emph{mapping class group $MCG(S;X)$ of $S$ relative to $X$} by:
\begin{equation}\label{eq:mcgtop}
MCG(S;X)=\begin{cases}
\pi_0\topp^+(S;X) & \text{if $S$ is orientable}\\
\pi_0\topp(S;X) & \text{if $S$ is non orientable.}
\end{cases}
\end{equation}
If $S$ is orientable (resp.\ non orientable), the group $MCG(S;X)$ is thus the set of isotopy classes of $\topp^+(S;X)$ (resp.\ $\topp(S;X)$), the isotopies being relative to the set $X$. If $f\in \topp^+(S;X)$ (resp.\ $\topp(S;X)$) then we let $[f]$ denote its mapping class in $MCG(S;X)$. Up to isomorphism, $MCG(S;X)$ only depends on the cardinality $k$ of the subset $X$, and we shall often denote this group by $MCG(S;k)$. If $X$ is empty, then we omit it from the notation, and shall just write $\topp(S)$, $\topp^{+}(S)$, $MCG(S)$ etc. The mapping class group may also be defined in other categories (PL category, smooth category), the groups obtained being isomorphic~\cite{Boldsen09}. The mapping class group has been widely studied from different points of view~--~see~\cite{Birman75,F,FM,I,P} for example. 
%\comj{I added some references here~--~feel free to add some more.}

If $k\in \N$, there is a surjective homomorphism $\sigma\colon\thinspace \topp(S;X) \to \Sigma_{k}$ defined by $f(x_{i})=x_{(\sigma(f))(i)}$ for all $f\in \topp(S;X)$ and $i\in \{ 1,\ldots,k \}$, where $\Sigma_{k}$ is the symmetric group on $k$ letters. This homomorphism induces a surjective homomorphism from $MCG(S;X)$ to $\Sigma_{k}$ that we also denote by $\sigma$, and its kernel is called the \emph{pure mapping class group of $S$ relative to $X$}, denoted by $PMCG(S;X)$. So we have a short exact sequence:
\begin{equation}\label{eq:sigmamcg}
1 \longrightarrow PMCG(S;X) \longrightarrow MCG(S;X) \stackrel{\sigma}{\longrightarrow} \Sigma_{k} \longrightarrow 1,
\end{equation}
and if $f\in \topp(S;X)$ then $[f]\in PMCG(S;X)$ if and only if $f$ fixes $X$ pointwise. 

Mapping class groups are closely related to surface braid groups. If $k\in \N$, the \res{$k$\textsuperscript{th} ordered configuration space} $F_k(S)$ of $S$ is the set of all ordered $k$-tuples of distinct points of $S$ that we may consider as a subspace of the $k$-fold product of $S$ with itself. The group $\Sigma_k$ acts freely on $F_k(S)$ by permuting coordinates, and the associated quotient is the \res{$k$\textsuperscript{th} unordered configuration space} of $S$, denoted by $D_k(S)$. The fundamental group $\pi_{1}D_k(S)$ (resp.\ $\pi_{1}F_k(S)$), denoted by $B_{k}(S)$ (resp.\ $P_{k}(S)$) is the \emph{braid group} (resp.\ \emph{pure braid group}) of $S$ on $k$ strings~\cite{FaN,FoN}, and $B_{k}(S)$ and $P_{k}(S)$ are related by a short exact sequence similar to that of~(\ref{eq:sigmamcg}). It is well known that $B_k(\mathbb{R}^{2})$ (resp.\ $P_{k}(\mathbb{R}^{2})$) is isomorphic to \res{Artin's braid group} (resp.\ \res{Artin's pure braid group}) on $k$ strings. If $\pi:\til{S}\ra S$ is a $d$-fold covering map, where $d\in \N$, then for all $k\geq 1$, there is a continuous map:
\begin{equation}\label{eq:liftconfig}
\varphi_k: D_k(S)\lra D_{kd}(\til{S})
\end{equation}
defined by $\varphi_k(A)=\pi^{-1}(A)$ for all $A\in D_k(S)$, and the induced homomorphism $\varphi_{k\#}: B_k(S)\ra B_{kd}(\til{S})$ on the level of fundamental groups is injective~\cite{GoncalvesGuaschi12}. Taking $\pi: \stwo\ra \rp{2}$ to be the standard $2$-fold (universal) covering, where $\stwo$ is the $2$-sphere and $\rp{2}$ is the real projective plane, this result was then applied in~\cite{GoncalvesGuaschi12} to classify the isomorphism classes of the finite subgroups of $B_{k}(\rp{2})$, and to show that $B_{k}(\rp{2})$ and $MCG(\rp{2};k)$ are linear groups for all $k\in \N$. 

If $g\geq 0$ (resp.\ $g\geq 1$), let $S=S_{g}$ (resp.\ $N=N_{g}$) be a compact, connected orientable (resp.\ non-orientable) surface of genus $g$ without boundary. In the non-orientable case, $g$ is the number of projective planes in a connected sum decomposition. If $g\geq 1$ and $k\geq 0$, the orientable double covering $\pi:S_{g-1}\ra N_g$ induces a homomorphism $\phi_k:MCG(N_{g};k)\ra MCG(S_{g-1};2k)$, where the $2k$-point subset of marked points in $S$ is equal to the inverse image by $\pi$ of a $k$-point subset of $N$. One of the main aims of this paper is to generalise the injectivity result of~\cite{GoncalvesGuaschi12} to this homomorphism. If $k=0$ and $g\geq 3$, it was shown in~\cite{BirmanChillingworth72,HT09}
% \comj{I don't have~\cite{BirmanChillingworth72} with me~--~do they say that $g\geq 3$? (they should do)} \comm{yes, they do. (as also Hope \& Tillman do)}
 that there exists an injective homomorphism $\phi:MCG(N_g) \to MCG(S_{g-1})$, and that via $\phi$, $MCG(N_g)$ may be identified with the subgroup of $MCG(S_{g-1})$ that consists of isotopy classes of symmetric homeomorphisms. In Section~\ref{sec:embeddings}, we show that a similar result holds for all $k\geq 0$, namely that the homomorphism $\phi_k$ induced by $\pi$ is injective, and that via $\phi_k$, $MCG(N_g; k)$ may be identified with the subgroup of $MCG(S_{g-1}; 2k)$ that consists of isotopy classes of symmetric homeomorphisms.
 % \comj{with respect to the symmetric homeomorphisms, is this also the case for our result?} \comm{I think so
 % because we are requiring compatibility with the double covering $\pi:S\to N$}

%$\phi_k:MCG(N;k)\ra MCG(S;2k)$, where $N=N_{g}$ is a non-orientable surface without boundary of genus $g\geq 1$, $g$ being the number of projective planes in a connected sum decomposition of $N$, $S=S_{g-1}$ is a compact, connected orientable surface without boundary of genus $g-1$, and $\phi_k$ is the homomorphism induced by the orientable double covering $\pi:S_{g-1}\ra N_g$. 
%
%mapping class groups of the two-fold orientable coverings of non-orientable surfaces. We consider the orientable double covering $\pi:S_{g-1}\ra N_g$. Let $k\in \N$, and let $X$ be a $k$-point subset  of $N_{g}$. 
%It is a classical result of Birman and Chillingworth that $MCG(N_g)$ may be identified with the subgroup of $MCG(S_{g-1})$ that consists of isotopy classes of symmetric mappings. The purpose of this paper is to show that a result analogous to that of~\cite{GoncalvesGuaschi12} holds for mapping class groups with marked points, namely that $\pi$ induces an injective homomorphism of $MCG(N_g; X)$ into $MCG(S_{g-1};\pi^{-1}(X))$.

\begin{theorem}\label{th:braidmcg}
Let $k,g\in \mathbb{N}$, let $N=N_{g}$ be a compact, connected, non-orientable surface of genus $g$ without boundary, and let $S$ be its orientable double covering. 
% Let $N=N_{g}$, where $g\geq 1$, and let $S=S_{g-1}$ be its orientable double covering. 
Then the homomorphism $\phi_k:MCG(N;k)\ra MCG(S;2k)$ induced by the covering $\pi:S\ra N$ is injective. 
 %\comj{Check for small values of $k$ if $g=1$} 
Further, if $g\geq 3$ then we have a commutative diagram of the following form:
\begin{equation}\label{eq:mcgdiag}
\begin{tikzcd}
1\arrow{r} &B_k(N)\arrow{r} \arrow[hook]{d}{\varphi_{k\#}} & MCG(N;k)\arrow{r}{\tau_{k}}\arrow[hook]{d}{\phi_k} & MCG(N)\arrow[hook]{d}{\phi} \arrow{r} &1\\
1\arrow{r} &B_{2k}(S)\arrow{r} &MCG(S;2k) \arrow{r}{\til{\tau}_{k}} &MCG(S) \arrow{r} &1,
\end{tikzcd}
\end{equation}
where $\tau_{k}$ and $\til{\tau}_{k}$ are the homomorphisms induced by forgetting the markings on the sets of marked points.
\end{theorem}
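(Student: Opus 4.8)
The plan is to establish the two assertions of the theorem in turn, treating injectivity of $\phi_k$ first and then constructing the commutative diagram~(\ref{eq:mcgdiag}).

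For the injectivity of $\phi_k\colon MCG(N;k)\to MCG(S;2k)$, I would build directly on the configuration-space map~(\ref{eq:liftconfig}) and the identification of $MCG(N;k)$ with symmetric mapping classes. The idea is that an element of $MCG(N;k)$ is represented by a homeomorphism $f$ of $N$ fixing the $k$-point set $X$ setwise. Since $\pi\colon S\to N$ is the orientable double covering, $f$ lifts to a homeomorphism $\til{f}$ of $S$ that is \emph{symmetric}, meaning it commutes with the non-trivial deck transformation $\theta$ of the covering, and that preserves the $2k$-point set $\pi^{-1}(X)$. This lift is unique once we require orientation preservation, so $f\mapsto[\til{f}]$ is a well-defined homomorphism, and it is exactly $\phi_k$. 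To prove injectivity, suppose $[\til{f}]$ is trivial in $MCG(S;2k)$, so that $\til{f}$ is isotopic to the identity relative to $\pi^{-1}(X)$. The key step is to promote this to a \emph{symmetric} (equivariant) isotopy, that is, an isotopy through homeomorphisms commuting with $\theta$; standard equivariant isotopy-extension or averaging arguments for free involutions (as in~\cite{BirmanChillingworth72}) allow one to do this while keeping the marked set invariant. Projecting such a symmetric isotopy down through $\pi$ then yields an isotopy of $f$ to the identity relative to $X$ in $N$, so $[f]$ is trivial in $MCG(N;k)$, giving injectivity.

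For the commutative diagram, I would first observe that the two rows are the Birman-type exact sequences associated with the fibrations obtained by forgetting marked points. The top row comes from filling in the $k$ marked points of $N$: forgetting $X$ induces $\tau_k$, and its kernel is the surface braid group $B_k(N)=\pi_1 D_k(N)$ by the standard Birman exact sequence, valid here because $g\geq 3$ ensures $N$ is aspherical with trivial (or manageable) mapping-class centre so that the point-pushing subgroup is precisely $B_k(N)$. The bottom row is the analogous Birman sequence for $S$ with $2k$ marked points, with kernel $B_{2k}(S)$. The left-hand vertical arrow is the injection $\varphi_{k\#}\colon B_k(N)\to B_{2k}(S)$ of~(\ref{eq:liftconfig}), the middle arrow is $\phi_k$, and the right-hand arrow is the embedding $\phi\colon MCG(N)\to MCG(S)$ of~\cite{BirmanChillingworth72,HT09}.

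The heart of the argument is then the commutativity of the two squares, and I expect the left square to be the main obstacle. Commutativity of the right square is essentially definitional: both composites send a mapping class in $MCG(N;k)$ to the class in $MCG(S)$ of its symmetric lift with markings forgotten, and one checks that forgetting markings commutes with lifting. For the left square one must verify that $\phi_k$ restricted to the braid subgroup $B_k(N)$ agrees with $\varphi_{k\#}$, i.e.\ that point-pushing a configuration of $k$ points in $N$ lifts, under the symmetric-lift construction, to point-pushing its preimage configuration of $2k$ points in $S$. This is precisely the content relating~(\ref{eq:liftconfig}) to the mapping-class picture: a loop in $D_k(N)$ representing a braid pushes the marked points around $N$, and its symmetric lift pushes the $2k$ preimage points around $S$, which is exactly the braid $\varphi_{k\#}$ by construction. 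The careful point is to match the point-pushing homomorphism $B_k(N)\to MCG(N;k)$ (the inclusion in the top row) with the map $\varphi_k$ on configuration spaces; I would make this precise by comparing the two as geometric monodromies of the respective forgetful fibrations and invoking the naturality of the Birman sequence with respect to the covering $\pi$. Once the squares commute, injectivity of $\phi_k$ also follows from the five-lemma, using the injectivity of $\varphi_{k\#}$ established in~\cite{GoncalvesGuaschi12} and of $\phi$ established in~\cite{BirmanChillingworth72,HT09}.
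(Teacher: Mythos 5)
Your reduction of injectivity to the claim that an isotopy of the symmetric lift $\til{f}$ to the identity (rel $\pi^{-1}(X)$) can be promoted to an \emph{equivariant} isotopy is precisely the point where the argument breaks down, and it cannot be repaired by ``standard averaging'' arguments: for $g=2$ the statement you invoke is false in the absence of marked points. Indeed, by Remark~\ref{rem:notinjective} the homomorphism $\phi\colon MCG(K)\to MCG(T)$ has non-trivial kernel; a homeomorphism $f$ of $K$ inducing $\alpha\mapsto\alpha$, $\beta\mapsto\alpha\beta$ has orientation-preserving lift $\til{f}$ isotopic to the identity of $T$, yet $\til{f}$ admits no equivariant isotopy to the identity, since such an isotopy would project to an isotopy of $f$ to $\operatorname{Id}_{K}$, contradicting $[f]\neq 1$ in $MCG(K)$. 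Equivariant-isotopy promotion for this free involution is a theorem of~\cite{BirmanChillingworth72} only when the quotient surface has genus at least $3$. Your argument nowhere uses the hypothesis $k\geq 1$ in a way that distinguishes it from $k=0$, so as written it would equally ``prove'' that $\phi_{0}=\phi$ is injective for $g=2$, which is false; the same objection applies to the case $g=1$, where $\pi_{1}\topp(\rp{2})\cong\pi_{1}\topp^{+}(\stwo)\cong\mathbb{Z}/2\mathbb{Z}$, the rows of the relevant diagram fail to be short exact, and the five-lemma does not apply as stated.

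For $g\geq 3$ your diagram argument does coincide with the paper's: both rows are short exact Birman sequences (the correct input being Hamstrom's theorem that $\pi_{1}\topp(N)$ and $\pi_{1}\topp^{+}(S)$ are trivial, rather than any condition on centres of mapping class groups), the squares commute by naturality of the diagram of fibrations~(\ref{eq:topdiag}), and the five-lemma together with injectivity of $\varphi_{k\#}$ and of $\phi$ yields injectivity of $\phi_{k}$. But the whole difficulty of the theorem is the case $g=2$, which the paper handles by a completely different route: it first shows $\ker{\phi_{k}}\subset PMCG(K;k)$, then uses the fibration diagram~(\ref{eq:bigcommdiagexact}) to set up an induction on $k$ reducing everything to injectivity of $\phi_{1}\colon MCG(K;1)\to MCG(T;2)$, and finally settles that case by explicit computation with the presentations of $P_{2}(T)\cong\mathbb{F}_{2}\oplus\mathbb{Z}\oplus\mathbb{Z}$ and $B_{2}(T)$ from the Appendix, splitting a putative kernel element into ``type one'' and ``type two'' according to its image in $MCG(K)\cong\mathbb{Z}_{2}\oplus\mathbb{Z}_{2}$ and deriving a contradiction in each case via the boundary homomorphism $\til{\partial}_{2}$ and an explicitly constructed lifted homeomorphism. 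None of this content is captured, or replaceable, by the equivariant-isotopy claim; as it stands your proof establishes the theorem only for $g\geq 3$.
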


%\comj{This depends on an answer to a previous comment:} 
Note that in contrast with~\cite{BirmanChillingworth72}, Theorem~\ref{th:braidmcg} holds for all $g\geq 1$, not just for $g\geq 3$. As we recall in Remark~\ref{rem:notinjective}, the result of~\cite{BirmanChillingworth72} does not hold if $g=2$. The proof of this exceptional case $g=2$ in Theorem~\ref{th:braidmcg} will turn out to be the most difficult, in part due to the non injectivity of $\phi:MCG(N_2) \to MCG(S_{1})$.

In Section~\ref{sec:vcd}, we compute the cohomological dimension of the braid groups of all compact surfaces without boundary different from $\stwo$ and $\rp{2}$, and we give an upper bound for the virtual cohomological dimension of the mapping class group $MCG(N_{g};k)$ for all $g\geq 2$ and $k\geq 1$. Recall that the \emph{virtual cohomological dimension} $\operatorname{\text{vcd}}(G)$ of a group $G$ is a generalisation of the cohomological dimension $\operatorname{\text{cd}}(G)$ of $G$, and is defined to be the cohomological dimension of any torsion-free finite index subgroup of $G$~\cite{Br}. In particular, if $G$ is a braid or mapping class group of finite (virtual) cohomological dimension then the corresponding pure braid or mapping class group has the same (virtual) cohomological dimension. In~\cite[Theorem~5]{GoncalvesGuaschi15}, it was shown that if $k\geq 4$ (resp. $k\geq 3$) then $\operatorname{\text{vcd}}(B_{k}(\stwo))=k-3$ (resp.\ $\operatorname{\text{vcd}}(B_{k}(\rp{2}))=k-2$). These results are generalised in the following theorem, the proof being a little more straightforward due to the fact that the braid groups of $S_{g}$ and $N_{g+1}$ are torsion free if $g\geq 1$.

\begin{theorem}\label{th:cdbraid}
Let $g,k\geq 1$. Then:
\begin{equation*}
\operatorname{\text{cd}}(B_{k}(S_{g}))=\operatorname{\text{cd}}(P_{k}(S_{g}))=\operatorname{\text{cd}}(B_{k}(N_{g+1}))=\operatorname{\text{cd}}(P_{k}(N_{g+1}))= k+1.
\end{equation*}
\end{theorem}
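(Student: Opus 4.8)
The plan is to prove the common value $k+1$ by establishing matching upper and lower bounds, exploiting the fact that for $g \geq 1$ the surface $S_g$ (resp.\ $N_{g+1}$) is aspherical with free abelian $H_2$, so that $F_k(S_g)$ and $F_k(N_{g+1})$ are finite-dimensional Eilenberg--MacLane spaces. First I would reduce the four equalities to two: since $P_k$ has finite index in $B_k$ and both are torsion free when $g \geq 1$ (a fact asserted in the excerpt just before the theorem), we have $\operatorname{\text{cd}}(B_k) = \operatorname{\text{cd}}(P_k)$ by the Serre property that cohomological dimension is invariant under passage to finite-index subgroups for torsion-free groups. So it suffices to compute $\operatorname{\text{cd}}(P_k(S_g))$ and $\operatorname{\text{cd}}(P_k(N_{g+1}))$.

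\smallskip

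For the \emph{upper bound} $\operatorname{\text{cd}} \leq k+1$, I would use the Fadell--Neuwirth fibration tower. The projection $F_k(S) \to F_{k-1}(S)$ forgetting the last point is a fibration with fibre $S \setminus \{k-1 \text{ points}\}$, which is an open surface and hence homotopy equivalent to a wedge of circles, so it is a $K(\pi,1)$ with free fundamental group of cohomological dimension $1$. Iterating, $F_k(S)$ fibres over $F_1(S) = S$ with successive fibres being punctured surfaces. Since $\operatorname{\text{cd}}(S_g) = \operatorname{\text{cd}}(\pi_1 S) = 2$ for $g \geq 1$ (the surface being aspherical and closed of dimension $2$), and each of the $k-1$ fibres contributes cohomological dimension $1$, the subadditivity of $\operatorname{\text{cd}}$ under group extensions gives $\operatorname{\text{cd}}(P_k(S)) \leq 2 + (k-1) = k+1$. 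The same argument applies verbatim to $N_{g+1}$, whose fundamental group also has cohomological dimension $2$ since $N_{g+1}$ is aspherical for $g+1 \geq 2$.

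\smallskip

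For the \emph{lower bound} $\operatorname{\text{cd}} \geq k+1$, the strategy is to exhibit a nonzero cohomology class in degree $k+1$, equivalently to show $\operatorname{\text{cd}}(P_k(S)) \geq k+1$ by finding a subgroup or a free abelian subgroup of the right rank, or by a direct cohomology computation on $F_k(S)$. The cleanest route is to locate a subgroup isomorphic to $\mathbb{Z}^{k+1}$: the closed aspherical surface contributes a $\mathbb{Z}^2$ (from a handle or, for $N_{g+1}$, from a suitable two-sided subsurface carrying a torus subgroup of the surface group), and one then pushes in $k-1$ further commuting central-type generators coming from small braids around the successive puncture points that are disjoint from one another, yielding a copy of $\mathbb{Z}^{k-1}$ commuting with the $\mathbb{Z}^2$. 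Since $\operatorname{\text{cd}}(\mathbb{Z}^{k+1}) = k+1$ and $\operatorname{\text{cd}}$ is monotone under inclusion of subgroups, this forces $\operatorname{\text{cd}}(P_k(S)) \geq k+1$. Combining the two bounds yields the equality.

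\smallskip

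I expect the \emph{main obstacle} to be the lower bound, specifically the careful geometric construction of a rank-$(k+1)$ free abelian subgroup with all generators genuinely commuting; one must verify that the ``surface'' $\mathbb{Z}^2$ and the $k-1$ loop generators can be realised by braids supported in pairwise disjoint regions of $S$, and in the non-orientable case $N_{g+1}$ one must be slightly careful that the relevant subsurface indeed carries a $\mathbb{Z}^2$ (which it does for $g+1 \geq 2$). An alternative for the lower bound, which I would keep in reserve, is to show directly that $H^{k+1}(F_k(S);\mathbb{Z}) \neq 0$ via the Leray--Hirsch or spectral-sequence analysis of the Fadell--Neuwirth tower, using that the top fibre's $H^1$ pairs nontrivially with the base's $H^2$; this is more computational but avoids the explicit subgroup construction.
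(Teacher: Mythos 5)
Your reduction to the pure braid groups and your upper bound via the Fadell--Neuwirth tower are correct and coincide with the paper's argument. The genuine gap is in your primary route to the lower bound: the subgroup $\mathbb{Z}^{k+1}$ you propose to construct does not exist in most of the cases covered by the theorem. For $g\geq 2$ the group $\pi_1(S_g)$ is a torsion-free cocompact Fuchsian group, hence word-hyperbolic, so all of its abelian subgroups are infinite cyclic; a handle (a one-holed torus) carries a free group of rank two, not $\mathbb{Z}^2$. The same holds for $\pi_1(N_h)$ when $h\geq 3$, so your parenthetical claim that the non-orientable surface carries a $\mathbb{Z}^2$ ``for $g+1\geq 2$'' is false except for the Klein bottle. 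Already for $k=1$ your plan would require $\mathbb{Z}^2\subset \pi_1(S_g)$, which fails for $g \geq 2$. Moreover the failure persists for all $k$: the strand-forgetting homomorphism $P_k(S_g)\to \pi_1(S_g)$ has kernel $P_{k-1}(S_g\setminus\{x_1\})$, the pure braid group of an open surface, whose cohomological dimension is $k-1$; since rank is additive in extensions of finitely generated abelian groups and is bounded above by cohomological dimension, any free abelian subgroup $A\subset P_k(S_g)$ satisfies $\operatorname{rank}(A)\leq 1+(k-1)=k$. So for hyperbolic surfaces the maximal free abelian rank is exactly $k$, and no subgroup whatsoever can certify $\operatorname{cd}\geq k+1$. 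Your construction does work for the torus and the Klein bottle, where the centre supplies the extra commuting generator, but those are precisely the cases $S_1$ and $N_2$.

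Your reserve plan (non-vanishing of $H^{k+1}(F_k(S);\mathbb{Z})$ via the spectral sequence of the tower) is the route the paper actually takes, but as sketched it misses the crux: in $E_2^{p,q}=H^p\bigl(P_k(S),H^q(\text{fibre})\bigr)$ the coefficient module $H^1(\text{fibre})\cong\mathbb{Z}^r$ carries a non-trivial monodromy action of $P_k(S)$, so one cannot simply ``pair the fibre's $H^1$ with the base's $H^2$'', nor invoke an inductive hypothesis stated for trivial coefficients. The paper strengthens the induction to the statement that $\operatorname{cd}(P_k(S))=k+1$ \emph{and} $H^{k+1}(P_k(S),\mathbb{Z})\neq 0$, and then handles the twisted coefficients by passing to coinvariants: letting $H\subset N^{\text{Ab}}$ be the subgroup generated by all elements $\alpha(x)-x$ with $\alpha \in P_k(S)$, the long exact sequence gives a surjection $H^{k+1}(P_k(S),N^{\text{Ab}})\to H^{k+1}(P_k(S),N^{\text{Ab}}/H)$ because the next term $H^{k+2}(P_k(S),H)$ vanishes by $\operatorname{cd}(P_k(S))=k+1$; the coinvariants $N^{\text{Ab}}/H$ are then computed from explicit presentations of surface pure braid groups to be $\mathbb{Z}^{2g}$ in the orientable case and $\mathbb{Z}^{g-1}\oplus\mathbb{Z}_2$ in the non-orientable case, both admitting non-zero cohomology in degree $k+1$ by the strengthened inductive hypothesis, whence $E_2^{k+1,1}=E_\infty^{k+1,1}\neq 0$. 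Without this step, or some equivalent treatment of the non-trivial action, the lower bound is not established; this is the part of the proof that carries the real content, and it is exactly the part your proposal leaves blank.
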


We then turn our attention to the mapping class groups. If $2g+k>2$, J.~Harer proved that~\cite[Theorem~4.1]{Harer86}:
\begin{equation*}
\operatorname{\text{vcd}}(MCG(S_g;k))= \begin{cases}
4g-5 & \text{if $k=0$}\\
k-3 & \text{if $g=0$ and $k>0$}\\
4g+k-4 & \text{if $g,k>0$.}
\end{cases}
\end{equation*}
Using the embedding of $MCG(N_{g})$ in $MCG(S_{g-1})$ given in~\cite{BirmanChillingworth72}, G.~Hope and U.~Tillmann showed that if $g\geq 3$ then $\operatorname{\text{vcd}}(MCG(N_g))\leq 4g-9$~\cite[Corollary~2.2]{HT09}. Using~\cite[Theorem~5]{GoncalvesGuaschi15}, it was shown in~\cite[Corollary~6]{GoncalvesGuaschi15} that if $k\geq 4$ (resp. $k\geq 3$) then $\operatorname{\text{vcd}}(MCG(\stwo;k))=k-3$ (resp.\ $\operatorname{\text{vcd}}(MCG(\rp{2};k))=k-2$). In the case of $\stwo$, we thus recover the results of Harer. In the second part of Section~\ref{sec:vcd}, we generalise some of these results to non-orientable surfaces with marked points. 

\begin{proposition}\label{prop:vcdmap}  
Let $k>0$. The mapping class groups $MCG(N_g;k)$ and $PMCG(N_g;k)$ have the same (finite) virtual cohomological dimension. Further:
\begin{enumerate}[(a)]
\item $\operatorname{\text{vcd}}(MCG(N_{2};k))=k$.
\item if $g\geq 3$ then $\operatorname{\text{vcd}}(MCG(N_g;k))\leq 4g+k-8$.
\end{enumerate}
\end{proposition}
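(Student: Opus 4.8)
The plan is to dispatch the three assertions in turn, in each case combining a Birman-type exact sequence with the value $\text{cd}(B_k(N_g))=k+1$ from Theorem~\ref{th:cdbraid} and the results of Harer and of Hope--Tillmann recalled above. For the first assertion, I would observe that $PMCG(N_g;k)$ is the kernel of the homomorphism $MCG(N_g;k)\to\Sigma_k$ of~(\ref{eq:sigmamcg}), hence a subgroup of index dividing $k!$, so of finite index. Since the virtual cohomological dimension is a commensurability invariant (intersecting a torsion-free finite-index subgroup of one group with the other produces a common torsion-free finite-index subgroup), the two groups share the same vcd. Finiteness of this common value follows from the injection $\phi_k\colon MCG(N_g;k)\hookrightarrow MCG(S_{g-1};2k)$ of Theorem~\ref{th:braidmcg}: the target has finite vcd by Harer's formula, and vcd cannot increase under passage to a subgroup.

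For part (b) I would use the upper row of the commutative diagram~(\ref{eq:mcgdiag}), which for $g\geq 3$ is the short exact sequence
\begin{equation*}
1\longrightarrow B_k(N_g)\longrightarrow MCG(N_g;k)\stackrel{\tau_k}{\longrightarrow}MCG(N_g)\longrightarrow 1.
\end{equation*}
As $B_k(N_g)$ is torsion-free, the standard subadditivity of vcd under extensions (pull back a torsion-free finite-index subgroup $Q'$ of the quotient and apply $\text{cd}(G')\leq\text{cd}(B_k(N_g))+\text{cd}(Q')$ to the resulting torsion-free extension $G'$) gives
\begin{equation*}
\text{vcd}(MCG(N_g;k))\leq \text{cd}(B_k(N_g))+\text{vcd}(MCG(N_g)).
\end{equation*}
Substituting $\text{cd}(B_k(N_g))=k+1$ from Theorem~\ref{th:cdbraid} and $\text{vcd}(MCG(N_g))\leq 4g-9$ from~\cite{HT09} yields the stated bound $4g+k-8$.

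The heart of the argument, and the main obstacle, is part (a), the Klein bottle case $g=2$, where this approach breaks down because $\pi_1(\text{Homeo}_0(N_2))\cong\mathbb{Z}$ is non-trivial and the point-pushing map is no longer injective. Here $MCG(N_2)$ is finite, and the Birman exact sequence takes the modified form
\begin{equation*}
1\longrightarrow B_k(N_2)/\mathbb{Z}\longrightarrow MCG(N_2;k)\stackrel{\tau_k}{\longrightarrow}MCG(N_2)\longrightarrow 1,
\end{equation*}
in which the central $\mathbb{Z}$ is the image of $\pi_1(\text{Homeo}_0(N_2))$. For the \emph{upper} bound I would instead iterate the forgetful homomorphisms $MCG(N_2;j)\to MCG(N_2;j-1)$: for $j\geq 2$ the kernel is the (free) fundamental group of the punctured surface $N_2\setminus\{j-1\ \text{points}\}$, of cohomological dimension $1$, whereas the first step from $j=0$ to $j=1$ has kernel $\pi_1(N_2)/Z(\pi_1(N_2))\cong D_\infty$, of vcd $1$; starting from $\text{vcd}(MCG(N_2))=0$ and applying subadditivity $k$ times gives $\text{vcd}(MCG(N_2;k))\leq k$.

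For the matching \emph{lower} bound I would pass to a torsion-free finite-index subgroup $H$ of the point-pushing group $B_k(N_2)/\mathbb{Z}$; its preimage $\widetilde{H}$ in $B_k(N_2)$ has finite index, so $\text{cd}(\widetilde{H})=\text{cd}(B_k(N_2))=k+1$ by Serre's theorem, and the central extension $1\to\mathbb{Z}\to\widetilde{H}\to H\to 1$ forces $\text{cd}(H)\geq k$, whence $\text{vcd}(MCG(N_2;k))\geq k$. The delicate points, where I expect the real work to lie, are the precise identification of the kernels of $\tau_k$ and of the forgetful maps for the Klein bottle, namely the non-injectivity of point-pushing and the exact role of the central $\mathbb{Z}$ arising from $\pi_1(\text{Homeo}_0(N_2))$, since these are exactly the phenomena that separate the case $g=2$ from $g\geq 3$.
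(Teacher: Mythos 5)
Your proposal is correct in substance and uses essentially the same ingredients as the paper's proof: reduction to the pure (finite-index) setting, the modified Birman sequence identifying the point-pushing subgroup of $MCG(N_2;k)$ as the braid group modulo the central image of $\pi_1(\topp(N_2))\cong\mathbb{Z}$, subadditivity over extensions with free kernels for the upper bound, the central-extension trick combined with $\operatorname{\text{cd}}(B_k(N_2))=k+1$ from Theorem~\ref{th:cdbraid} for the lower bound, and an identical argument for part (b). The only organizational difference is that the paper runs a single induction on the groups $P_j(K)/Z(P_j(K))$ via the Fadell--Neuwirth sequences $1\to P_1(K\setminus\{x_0\})\to P_{j+1}(K)/Z(P_{j+1}(K))\to P_j(K)/Z(P_j(K))\to 1$, extracting both bounds at each step, whereas you separate the upper bound (iterated forgetful maps) from the lower bound (a one-shot argument with $B_k(N_2)/\mathbb{Z}$); the mathematical content is the same.

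Two points in your treatment of (a) need repair, though both are easily fixed. First, the forgetful homomorphism $MCG(N_2;j)\to MCG(N_2;j-1)$ that you iterate does not exist: a mapping class may permute the marked points, so forgetting one point is only defined on the pure groups. Your iteration must run on $PMCG(N_2;j)\to PMCG(N_2;j-1)$, which is harmless since your first assertion lets you pass between $MCG$ and $PMCG$ (and this is exactly what the paper does, working with $PMCG(K;k)$ throughout). Second, your parenthetical justification of subadditivity of vcd (pull back a torsion-free finite-index subgroup of the quotient; an extension of torsion-free by torsion-free is torsion-free) requires the kernel to be torsion-free, so it does not apply to your first step, whose kernel $\pi_1(N_2)/Z(\pi_1(N_2))\cong D_\infty$ contains $2$-torsion. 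The fix is immediate: in that step the quotient $MCG(N_2)$ is finite, so the kernel has finite index and $\operatorname{\text{vcd}}(MCG(N_2;1))=\operatorname{\text{vcd}}(D_\infty)=1$, after which all remaining kernels are free. Incidentally, your identification of this kernel as the infinite dihedral group is correct; the paper asserts $P_1(K)/Z(P_1(K))\cong\mathbb{Z}\oplus\mathbb{Z}_2$, which is a slip (conjugation by the image of $\beta$ inverts the image of $\alpha$, so the quotient is $\mathbb{Z}\rtimes\mathbb{Z}_2\cong D_\infty$), though both groups have vcd equal to $1$, so nothing downstream is affected.
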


This paper is organised as follows. In Section~\ref{sec:odc}, we recall some definitions and results about orientation-true mappings and the orientable double covering of non-orientable surfaces, we describe how we lift an element of $\topp(N)$ to one of $\topp^{+}(S)$ in a continuous manner, and we show how this correspondence induces a homomorphism from the mapping class group of a non-orientable surface to that of its orientable double covering (Proposition~\ref{prop:hopetillmann}). In Section~\ref{sec:embeddings}, we prove Theorem~\ref{th:braidmcg} using long exact sequences of fibrations  involving the groups that appear in equation~(\ref{eq:mcgtop}) and the corresponding braid groups~\cite{Birman69,Scott}. In most cases, using~\cite{GoncalvesGuaschi12,Hamstrom66}, we obtain commutative diagrams of short exact sequences, and the conclusion is obtained in a straightforward manner. The situation is however much more complicated in the case where $N$ is the Klein bottle and $S$ is the $2$-torus $T$, due partly to the fact that the exact sequences that appear in the associated commutative diagrams are no longer short exact. This case requires a detailed analysis, notably in the case $k=1$. In Section~\ref{sec:vcd}, we prove Theorem~\ref{th:cdbraid} and Proposition~\ref{prop:vcdmap}. Finally, in an Appendix, we provide presentations of 
$P_{2}(T)$ and $B_{2}(T)$ that we use in the proof of Theorem~\ref{th:braidmcg}.

\subsubsection*{Acknowledgements}

The first-named author was partially supported by FAPESP-Funda\c c\~ao de Amparo a Pesquisa do
Estado de S\~ao Paulo, Projeto Tem\'atico Topologia Alg\'ebrica, Geom\'etrica   2012/24454-8. He and the second-named author would like to thank 
the `R\'eseau Franco-Br\'esilien en Math\'ematiques' for financial support for their respective visits to the Laboratoire de Math\'ematiques Nicolas Oresme UMR CNRS~6139, Universit\'e de Caen Normandie, from the 2\textsuperscript{nd} to the 15\textsuperscript{th} of September 2016, and to the Instituto de Matem\'atica e Estat\'istica, Universidade de S\~ao Paulo, from the 9\textsuperscript{th} of July to the 1\textsuperscript{st} of August 2016. The third-named author was supported by the Consejo Nacional de Ciencia y Tecnolog\'{i}a to visit the Laboratoire de Math\'ematiques Nicolas Oresme UMR CNRS~6139, Universit\'e de Caen Normandie, from the 1\textsuperscript{st} of February 2014 to the 31\textsuperscript{st} of January 2015. During that period, he was also partially supported by the Laboratorio Internacional Solomon Lefschetz (LAISLA). %\comj{Miguel: mention Laisla?}

%############################
%############################
\section{Orientation-true mappings and the orientable double covering}\label{sec:odc}

%\comj{I merged two short sections.}

Let $M,N$ be manifolds with base points $x_0,y_0$, respectively. Following~\cite{O}, a pointed map $f:(M,x_0)\ra (N,y_0)$ is called {\it orientation true} if the induced homomorphism
$$
f_*:\pi_1(M,x_0)\lra \pi_1(N,y_0)
$$
sends orientation-preserving (resp.\ orientation-reversing) elements to orientation-preserving (resp.\ orientation-reversing) elements. In other words, the map $f$ is orientation-true if for all $\alpha\in \pi_1(M,x_0)$, either $\alpha$ and $f_*(\alpha)$ are both orientation preserving or they are are both orientation reversing. In the case of a branched covering $f:M \ra N$ it follows by \cite[Proposition 1.4]{GKZ} that $f$ is orientation-true. In what follows, we will be interested in the case of maps between surfaces. We will start with the case of non-orientable surfaces.

\begin{lemma}\label{lem:l2}
Every homeomorphism $f:N\ra N$ of a non-orientable surface is orientation true. Consequently, the subgroup of all orientation-preserving loops is invariant with respect to $f_*$.
\end{lemma}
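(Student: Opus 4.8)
I want to show that any homeomorphism $f\colon N\to N$ of a non-orientable surface is orientation-true, meaning $f_*$ preserves the partition of $\pi_1(N,x_0)$ into orientation-preserving and orientation-reversing loops. Let me think about what "orientation-preserving loop" means and find an intrinsic, homeomorphism-invariant characterization.

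A loop $\alpha$ in a surface is orientation-preserving if transporting a local orientation around $\alpha$ returns it unchanged; otherwise it is orientation-reversing. The key observation is that this is captured algebraically by a canonical homomorphism. Namely there is the first Stiefel–Whitney class, or equivalently the orientation character $w\colon \pi_1(N)\to \{\pm 1\}\cong\mathbb{Z}/2$, which sends a loop to $+1$ if it is orientation-preserving and $-1$ if orientation-reversing. This homomorphism is intrinsic to the manifold: it classifies the orientation double covering, and equivalently $w$ is the composite $\pi_1(N)\to H_1(N;\mathbb{Z}/2)\to\mathbb{Z}/2$ detecting the first Stiefel–Whitney class $w_1(N)$ on the image of a loop.

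**Key steps.**
First I would establish that the orientation-preserving loops are exactly $\ker(w)$, the kernel of the orientation character, and that this kernel is a subgroup of index two (here is where non-orientability of $N$ enters: since $N$ is non-orientable, $w$ is surjective, so the subgroup is proper of index exactly two). Second, and this is the crux, I would argue that $w$ is natural with respect to homeomorphisms. Concretely, for a homeomorphism $f$, a local orientation is carried to a local orientation by $f$ (a homeomorphism of a manifold induces an isomorphism on each local homology group $H_n(N,N\setminus\{x\})$), so whether or not parallel transport of orientation around $\alpha$ is trivial is preserved when we push $\alpha$ forward to $f\circ\alpha$. In homological terms, $f$ induces an isomorphism on $H_1(N;\mathbb{Z}/2)$ sending $w_1(N)$ to $w_1(N)$ (since $w_1$ is a homeomorphism invariant), which gives the identity $w\circ f_* = w$ of characters on $\pi_1(N)$. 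From $w(f_*(\alpha))=w(\alpha)$ it follows immediately that $\alpha$ and $f_*(\alpha)$ lie on the same side of the partition, which is exactly the definition of orientation-true. Finally, the stated consequence is then formal: the subgroup of orientation-preserving loops is $\ker(w)$, and $w\circ f_*=w$ forces $f_*(\ker(w))=\ker(w)$, i.e. the subgroup is $f_*$-invariant.

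**Main obstacle.**
The conceptual content is entirely concentrated in the naturality statement $w\circ f_* = w$, i.e. the homeomorphism-invariance of the orientation character. The cleanest justification is that a homeomorphism preserves local orientations at the level of local homology and that parallel transport of orientation along a loop is a topological notion, so it is respected by any homeomorphism; equivalently one invokes the topological invariance of $w_1$. I would expect the only real subtlety to be phrasing this invariance carefully, since one must avoid circularity: the point is precisely that "orientation-reversing" is defined intrinsically via local orientations, which a homeomorphism must permute compatibly, rather than via some auxiliary choice that $f$ need not respect. Once $w\circ f_*=w$ is in hand, both assertions of the lemma are immediate.
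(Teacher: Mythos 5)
Your proof is correct, and its conceptual core coincides with the paper's: both arguments rest on the fact that transporting a local orientation around a loop is a purely topological operation, so a homeomorphism carries the transport data for a loop $h$ to the transport data for $f\circ h$. The difference lies in the formalization. The paper argues geometrically: it fixes a local orientation $O_1$ at the basepoint, transports it along $h$ through a finite chain of small open sets $U_i$, and notes that the images $f(U_i)$ transport the induced local orientation $O_2$ along $f\circ h$; it then rephrases this by saying that the pullbacks of the tangent bundle $TN$ under $h$ and $f\circ h$ are homeomorphic bundles over $S^1$, trivial or twisted according to whether the loop preserves or reverses orientation. You instead package orientation transport into the orientation character $w\colon \pi_1(N)\to \mathbb{Z}/2$ and reduce everything to the naturality identity $w\circ f_*=w$, justified by the action of $f$ on the local homology groups $H_2(N,N\setminus\{x\};\mathbb{Z})$. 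Your version is cleaner on one technical point: the paper's tangent-bundle reformulation implicitly asks a homeomorphism (rather than a diffeomorphism) to interact with $TN$, which strictly speaking requires either a smoothing argument or exactly the local-homology substitute you employ; your formulation sidesteps this and also makes the second assertion of the lemma (invariance of the subgroup of orientation-preserving loops, i.e.\ of $\ker(w)$) completely formal, whereas the paper simply asserts it as a consequence. What the paper's argument buys in exchange is elementariness: no homology or characteristic classes, just chains of coordinate neighbourhoods.
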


Lemma~\ref{lem:l2} is an obvious consequence of~\cite[Proposition~1.4]{GKZ}, but we shall give a more direct proof.

\begin{proof}[Proof of Lemma~\ref{lem:l2}]
Let $x_0\in N$ be a base point and consider $\alpha\in \pi_1(N,x_0)$, which we represent by a loop $h:S^1\ra N$. We fix a local orientation $O_1$ at $x_0$, and we consider the orientation on $N$ induced by $f$, denoted by $O_2$. Moving the orientation $O_1$ along the path $h$ by a finite sequence of small open sets $U_i$, the images $f(U_i)$ may be used to transport the local orientation $O_2$ along the path $f\circ h$.

Consider the maps $h,f\circ h:S^1\ra N$ and the tangent bundle $TN$ of $N$. Pulling back by these maps, we obtain bundles over $S^1$ that are homeomorphic, and that are the trivial bundle if the loop is orientation-preserving or the twisted bundle if the loop is orientation-reversing.
\end{proof}

Let $g\geq 1$, and let $N_{g}$ be as defined in the introduction. The unique orientable double covering of $N_g$ may be constructed as follows  (see~\cite{Lima} for example). Let the orientable surface $S_{g-1}$ be embedded in $\mathbb{R}^{3}$ in such a way that it is invariant under the reflections with respect to the $xy$-, $yz$- and $xz$-planes. Consider the involution $J:S_{g-1}\ra S_{g-1}$ defined by $J(x,y,z)=(-x,-y,-z)$. The orbit space $S_{g-1}/\langle J\rangle$ is homeomorphic to the surface $N_g$, and the associated quotient map $\pi:S_{g-1}\ra N_g$ is a double covering.

To simplify the notation, from now on, we will drop the subscripts $g$ and $g-1$ from $N$ and $S$ respectively unless there is risk of confusion, so $S$ will be the orientable double covering of $N$. As indicated previously, the map $\pi$ gives rise to a map on the level of configuration spaces that induces an injective homomorphism $B_k(N)\ra B_{2k}(S)$ of the corresponding braid groups, and this allows us to study the braid groups of a non-orientable surface in terms of those of its orientable double covering~\cite{GoncalvesGuaschi12}.

The following result is an immediate consequence of Lemma~\ref{lem:l2} and is basically contained in~\cite[Key-Lemma~2.1]{HT09}.
 
%\comm{Before the Lemma it must be stated that $N=N_g$ and $S=S_g-1$ to avoid stating it every time it is used (as in Theorem \ref{th:braidmcg})}

\begin{lemma}\label{lem:th1}
Let $f:N\ra N$ be a homeomorphism of a non-orientable surface, and let $\pi:S\ra N$ be the orientable double covering. Then $f$ admits a lift, and the number of lifts is exactly two.
\end{lemma}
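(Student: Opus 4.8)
The plan is to derive both the existence and the exact count of lifts from the standard lifting criterion for covering spaces, with Lemma~\ref{lem:l2} supplying the crucial algebraic input. Recall that a lift of $f$ is a homeomorphism $\til{f}\colon S\ra S$ satisfying $\pi\circ\til{f}=f\circ\pi$; equivalently, it is a lift of the map $f\circ\pi\colon S\ra N$ through the covering $\pi\colon S\ra N$. Since $N$ is non-orientable, the orientation homomorphism $w\colon\pi_1(N)\ra\bz/2\bz$, which sends a loop to $0$ or to $1$ according to whether it is orientation-preserving or orientation-reversing, is surjective. Hence $S$ is connected and $\pi$ is the regular double covering associated with the index-two subgroup $H=\pi_*(\pi_1(S))$ of orientation-preserving loops, which is precisely the kernel of $w$.

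First I would fix a basepoint $s_0\in S$, set $n_0=\pi(s_0)$, and note that the based lifting criterion produces a lift $\til{f}$ with $\til{f}(s_0)=\til{s}$, for a chosen $\til{s}\in\pi^{-1}(f(n_0))$, if and only if $(f\circ\pi)_*(\pi_1(S,s_0))\subseteq\pi_*(\pi_1(S,\til{s}))$. The left-hand side equals $f_*(\pi_*(\pi_1(S,s_0)))=f_*(H)$, while, because the covering is regular, the right-hand side equals $H$ for every choice of $\til{s}$ in the fibre. By Lemma~\ref{lem:l2}, $f$ is orientation-true, so $f_*$ preserves the subgroup of orientation-preserving loops; as $f_*$ is an isomorphism, $f_*(H)=H$. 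The inclusion therefore holds, and a lift exists.

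To see that each such lift is a homeomorphism, I would apply the same construction to $f^{-1}$ to obtain a lift $g$ of $f^{-1}$; then $g\circ\til{f}$ and $\til{f}\circ g$ are lifts of the identity, hence deck transformations, so $\til{f}$ is invertible with continuous inverse. For the count, the unique-lifting property (valid since $S$ is connected) shows that a lift is determined by the image $\til{f}(s_0)$ of the basepoint, and the verification above shows that each of the two points of the fibre $\pi^{-1}(f(n_0))$ is realised. Equivalently, if $\til{f}$ is one lift then $J\circ\til{f}$ is the only other, where $J$ generates the order-two group of deck transformations. Either way there are exactly two lifts.

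The argument is essentially routine once the lifting criterion is in place; the one point requiring care, and the real crux, is the verification of the criterion, where regularity of the covering guarantees that $\pi_*(\pi_1(S,\til{s}))$ is independent of the basepoint $\til{s}$ in the fibre, and where Lemma~\ref{lem:l2} is exactly what supplies the invariance $f_*(H)=H$. Establishing that the number of lifts is \emph{exactly} two, rather than merely at least two, is the other place to be careful, and this rests on the connectedness of $S$ through the unique-lifting property.
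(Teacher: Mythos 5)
Your proposal is correct and takes essentially the same approach as the paper: Lemma~\ref{lem:l2} supplies the invariance of the orientation-preserving subgroup $\pi_{*}(\pi_1(S))$ under $f_{*}$, the lifting criterion then gives existence, and the two-point fibres of the double covering (equivalently, the deck transformation $J$) give exactly two lifts. The paper's proof is just a terser version of yours, leaving the lifting criterion, the homeomorphism check, and the counting argument implicit as basic properties of covering spaces.
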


\begin{proof}
Using Lemma~\ref{lem:l2}, we know that $f_{\#}|_{\pi_1(S)}(\pi_1(S))\subset  \pi_1(S)$, where we identify $\pi_1(S)$ with a subgroup of $\pi_1(N)$. By basic properties of covering spaces, the map $f$ lifts to a map $f':S \longrightarrow S$, and since $S\longrightarrow N$ is a double covering, there are two lifts. \end{proof}

The lifts of $f$ are in one-to-one correspondence with the group $\langle J\rangle$ of deck transformations. There is thus a natural way to choose a lift in a continuous manner simply by choosing $\tilde f$ to be the lift of degree $1$ (the other lift is of degree $-1$ since $J$ is of degree $-1$). Let $\rho:\topp(N)\ra \topp^{+}(S)$ denote this choice of lift. We may use Lemma~\ref{lem:th1} to compare mapping class groups of orientable and non-orientable surfaces.
% \comj{Isn't this result basically in~\cite{HT09}?}

%\comj{I moved this proposition from the end of the section to here. I also moved a paragraph that was in this section to the beginning of the next section. The injectivity has also been added.}
\begin{proposition}\label{prop:hopetillmann}
Let $N$ be a non-orientable surface, and let $S$ be its orientable double covering. Then there is a homomorphism $\phi: MCG(N)\ra MCG(S)$ such that the following diagram commutes:
\begin{equation}\label{eq:commrho}
\begin{gathered}
\xymatrix{\topp(N)\ar[r]^{\rho}\ar[d]&\topp^{+}(S)\ar[d]\\MCG(N)\ar[r]^{\phi}&MCG(S).}
\end{gathered}
\end{equation}
Further, if the genus of $N$ is greater than or equal to $3$ then $\phi$ is injective.
\end{proposition}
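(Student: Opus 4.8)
The plan is to first upgrade the lift-selection map $\rho$ to a group homomorphism into $\topp^{+}(S)$, then to show that it respects isotopy and hence descends to a homomorphism $\phi$ on mapping class groups, and finally to establish injectivity for $g\geq 3$ by a fundamental-group argument. For the first part, note that since $\rho(f)$ is by definition the lift of degree $1$ it is orientation preserving, so indeed $\rho(f)\in\topp^{+}(S)$. To see that $\rho$ is multiplicative, for $f,g\in\topp(N)$ the composite $\rho(f)\circ\rho(g)$ satisfies $\pi\circ\rho(f)\circ\rho(g)=f\circ\pi\circ\rho(g)=(f\circ g)\circ\pi$, so it is a lift of $f\circ g$; as degree is multiplicative it has degree $1$, and the uniqueness of the degree-$1$ lift (Lemma~\ref{lem:th1}) gives $\rho(f)\circ\rho(g)=\rho(f\circ g)$. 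For the descent, let $f,f'$ be isotopic via $H\colon N\times[0,1]\to N$ with each $H_t$ a homeomorphism. Applying the homotopy lifting property of $\pi$ to $(s,t)\mapsto H(\pi(s),t)$ with initial lift $\rho(f)$, I obtain a continuous $\tilde H\colon S\times[0,1]\to S$ with $\tilde H_0=\rho(f)$ and $\pi\circ\tilde H_t=H_t\circ\pi$. Each $\tilde H_t$ is then a lift of the homeomorphism $H_t$, hence a homeomorphism by Lemma~\ref{lem:th1}; since $\deg\tilde H_t$ is integer-valued and continuous in $t$, it is constant equal to $1$, so $\tilde H_t=\rho(H_t)\in\topp^{+}(S)$ and $\tilde H$ is an isotopy from $\rho(f)$ to $\rho(f')$. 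Thus $\phi([f]):=[\rho(f)]$ is well defined, is a homomorphism because $\rho$ is, and makes~\eqref{eq:commrho} commute by construction.

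For injectivity when $g\geq 3$, observe that $S=S_{g-1}$ then has genus $g-1\geq 2$, so $H:=\pi_1(S)$, regarded via $\pi_{*}$ as the index-two orientation-preserving subgroup of $\pi_1(N)$ (Lemma~\ref{lem:l2}), is a closed surface group of genus $\geq 2$ and hence has trivial centre $Z(H)=1$. Suppose $\phi([f])=1$, i.e.\ $\rho(f)\simeq\id_S$. After choosing compatible basepoints and a connecting path along which the induced maps are measured, the automorphism of $H$ induced by $\rho(f)$ is exactly the restriction to $H$ of a representative $f_{*}\in\operatorname{Aut}(\pi_1(N))$ of the outer class of $f$, because $\pi_{*}$ intertwines the based induced maps. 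Since $\rho(f)\simeq\id_S$, this restriction is an inner automorphism $\operatorname{inn}(h_0)$ of $H$ for some $h_0\in H\subseteq\pi_1(N)$. Replacing $f_{*}$ by $\psi:=\operatorname{inn}(h_0)^{-1}\circ f_{*}$, which represents the same outer class, yields $\psi\in\operatorname{Aut}(\pi_1(N))$ with $\psi|_{H}=\id_H$ and $\psi(H)=H$.

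Now pick $t\in\pi_1(N)\setminus H$; since $\psi$ fixes $H$ and permutes its cosets, $\psi(t)=tc$ with $c\in H$. For any $h\in H$ we have $tht^{-1}\in H$, so on one hand $\psi(tht^{-1})=tht^{-1}$ and on the other $\psi(tht^{-1})=\psi(t)h\psi(t)^{-1}=tchc^{-1}t^{-1}$; comparing gives $chc^{-1}=h$ for all $h\in H$, i.e.\ $c\in Z(H)=1$. Hence $\psi(t)=t$, and as $\pi_1(N)=\langle H,t\rangle$ we conclude $\psi=\id$, so $f_{*}=\operatorname{inn}(h_0)$ is inner and $f$ represents the trivial class in $\operatorname{Out}(\pi_1(N))$. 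Since $\chi(N_g)=2-g<0$ for $g\geq 3$, the Dehn--Nielsen--Baer theorem for non-orientable surfaces furnishes an injection $MCG(N)\hookrightarrow\operatorname{Out}(\pi_1(N))$, under which $[f]$ is sent to the trivial outer class; therefore $[f]=1$, as required. The main obstacle is this injectivity step: one must track basepoints and connecting paths carefully so that the automorphism induced by $\rho(f)$ is genuinely the restriction to $H$ of a representative of $f_{*}$, and the decisive leverage is the centrelessness of the genus-$\geq 2$ surface group $\pi_1(S)$, which fails precisely in the genus-$2$ case $N=\klein$ (where $S=T$ has abelian $\pi_1$), explaining why injectivity is asserted only for $g\geq 3$; I would also flag that passing from ``$f_{*}$ inner'' to ``$f\simeq\id_N$'' relies on the cited Dehn--Nielsen--Baer theorem in the non-orientable, negative-Euler-characteristic setting.
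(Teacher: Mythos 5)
Your proposal is correct, and while your first half (lifting isotopies via the homotopy lifting property, and multiplicativity of $\rho$ from uniqueness of degree-one lifts) is essentially the paper's argument in more detail, your injectivity proof for $g\geq 3$ takes a genuinely different route. The paper deduces injectivity from the Birman--Chillingworth identification $MCG(N)\cong C\langle J\rangle/\langle [J]\rangle$, where $C\langle J\rangle$ is the subgroup of the extended mapping class group $MCG^{\pm}(S)$ whose elements admit a $J$-equivariant representative: it first checks that $\rho(f)$ commutes with $J$ (both $J\circ\rho(f)$ and $\rho(f)\circ J$ are the orientation-reversing lift of $f$), so that $\im{\phi}\subseteq C\langle J\rangle\cap MCG(S)$, and then invokes the Hope--Tillmann Key Lemma to see that the quotient map restricts to an isomorphism $C\langle J\rangle\cap MCG(S)\to MCG(N)$, producing a retraction whose composite with $\phi$ is the identity of $MCG(N)$. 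You instead argue at the level of fundamental groups: identifying $\pi_1(S)$ with the index-two subgroup $H\leq\pi_1(N)$ of orientation-preserving loops, you show that a based representative of $[f]$ whose restriction to $H$ is inner must itself be inner, the decisive input being triviality of the centre of $\pi_1(S_{g-1})$ for $g-1\geq 2$, and you conclude via asphericity together with the Dehn--Nielsen--Baer/Baer-type theorem for closed non-orientable surfaces with $\chi<0$ (homotopic homeomorphisms are isotopic), so that $MCG(N)\hookrightarrow\operatorname{Out}(\pi_1(N))$. Your basepoint bookkeeping and the coset computation $\psi(t)=tc$, $c\in Z(H)=1$, are sound. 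Comparing the two: your route is more elementary in its group theory and makes the failure at $g=2$ completely transparent (the centre of $\pi_1(T)\cong\mathbb{Z}^{2}$ is everything), but it needs the non-orientable homotopy-implies-isotopy theorem as an external citation; the paper's route leans instead on~\cite{BirmanChillingworth72,HT09}, an input of comparable weight, and in exchange obtains the stronger structural fact that $\phi$ identifies $MCG(N)$ with the subgroup of symmetric mapping classes of $MCG(S)$, which is the form of the result used in the introduction and generalised in Theorem~\ref{th:braidmcg}.
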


\begin{proof}
If $f,g\in \topp(N)$ are isotopic homeomorphisms then an isotopy between them lifts to an isotopy between the orientation-preserving homeomorphisms $\rho(f)$ and $\rho(g)$ of $S$. This proves the first part of the statement. 
%\comj{This part of the proof is new:}
 For the second part, suppose that the genus of $N$ is greater than or equal to $3$. Let $MCG^{\pm}(S)$ denote the extended mapping class group of $S$ consisting of the isotopy classes of all homeomorphisms of $S$, and let $C\!\left\langle J\right\rangle$ be the subgroup of $MCG^{\pm}(S)$ defined by:
\begin{equation*}
C\!\left\langle J\right\rangle=\left\{ \left. [f]\in MCG^{\pm}(S) \,\right\rvert\,\text{$f\in \topp(S)$, and there exists $f'\simeq f$ such that $f'\circ J=J\circ f'$}\right\},
\end{equation*}
where $f'\simeq f$ means that $f'$ is isotopic to $f$. By~\cite{BirmanChillingworth72}, $MCG(N)\cong C\!\left\langle J\right\rangle\!/\!\left\langle [J]\right\rangle$. Let $\pi\colon C\!\left\langle J\right\rangle \to C\!\left\langle J\right\rangle\!/\!\left\langle[ J]\right\rangle$ denote the quotient map. Let $f\in \topp(N)$. Then $\phi([f])=[\rho(f)]$ by diagram~(\ref{eq:commrho}). By Lemma~\ref{lem:th1} and the comment that follows it, $f$ admits exactly two lifts, $\rho(f)$ and $J\circ \rho(f)$, the first (resp.\ second) preserving (resp.\ reversing) orientation. Now $\rho(f)\circ J$ is an orientation-reversing lift of $f$, from which we conclude that $J\circ \rho(f)=\rho(f)\circ J$. Hence $[\rho(f)]\in C\!\left\langle J\right\rangle\cap MCG(S)$, and so $\im{\phi}\subset C\!\left\langle J\right\rangle\cap MCG(S)$. On the other hand, $\pi\left\lvert_{C\left\langle J\right\rangle\cap MCG(S)}\right. \colon C\!\left\langle J\right\rangle\cap MCG(S) \to MCG(N)$ is an isomorphism using the proof of~\cite[Key-Lemma~2.1]{HT09}. It then follows
% \comj{I think\ldots}
that $\pi\circ\phi=\operatorname{Id}_{MCG(N)}$, in particular $\phi$ is injective.
\end{proof}

In the case of surfaces with marked points, there is another continuous way to choose a lift. For example, given a finite subset  $X=\{x_1,\ldots ,x_k\}$ of $N$, denote its preimage under $\pi$ by:
\begin{equation*}
\til{X}=\{x_1',x_1'',x_2',x_2'', \ldots,x_k',x_k''\},
\end{equation*}
where $\{ x_i',x_i'' \}=\pi^{-1}(x_{i})$ for all $1\leq i\leq k$. Pick a subset of $\til{X}$ that contains exactly one element of $\{ x_i',x_i'' \}$, denoted by $\overline x_i$, for all $1\leq i\leq k$. If $f\in \topp(N;X)$ then the restriction of $f$ to $X$ is a given permutation of $X$, and if $j\in \{ 1,\ldots,k\}$ is such that $f(x_1)=x_j$, we may define $\til f$ to be the unique lift of $f$ such that $\til f(\overline x_1)=\overline x_j$. The two choices for $\overline x_j$ correspond to the two possible lifts of $f$.

\begin{remark}\label{rem:notinjective}
Suppose that the genus of $N$ is $2$, in which case $N$ is the Klein bottle $K$. If $T$ is the torus, the homomorphism $\phi\colon MCG(K) \to MCG(T)$ is not injective, and it may be described as follows. Set $\pi_1(K)=\left\langle \left. \alpha, \beta \,\right\lvert\, \alpha\beta\alpha\beta^{-1} \right\rangle$. The group $MCG(K)$ is isomorphic to $\mathbb{Z}_2\oplus \mathbb{Z}_2$~\cite{Lick},
 %, and write $\mathbb{Z}_2\oplus \mathbb{Z}_2= \left\{ (\overline{i},\overline{j}) \,\left\lvert\, i,j\in \{0,1\}\right.\right\}$. 
and its elements are given by the mapping classes of the following automorphisms: 
\begin{equation}\label{eq:eltsmcgK}
\begin{cases}
\alpha \longmapsto \alpha\\
\beta \longmapsto \beta, 
\end{cases} \quad \begin{cases}
\alpha \longmapsto \alpha\\
\beta \longmapsto \alpha\beta, 
\end{cases} \quad
\begin{cases}
\alpha \longmapsto \alpha\\
\beta \longmapsto \beta^{-1}, 
\end{cases} \quad
\begin{cases}
\alpha \longmapsto \alpha\\
\beta \longmapsto \alpha\beta^{-1}.
\end{cases}
\end{equation}
%Using standard covering theory arguments, \comm{I suggest to expand the part of ``using standard covering theory arguments''} 
Let $\pi : T\to K$ be the orientable double covering of $K$ and let  $\pi_1(T)=\left\langle a,b \,\left\lvert\, [a,b] \right.\right\rangle$, so that
$ \pi_{\#}(a)=\alpha$ and $\pi_{\#}(b)=\beta^2$. Given a map $f:K \to K$ such that the induced homomorphism on the fundamental group is given by
$f_{\#}(\alpha)=\alpha^r$,   $f_{\#}(\beta)=\alpha^u\beta^v$, if $v$ is odd, then $f$ lifts to a map from $T$ to $T$, and there are exactly two lifts. The matrices of the induced homomorphisms of these lifts on $\pi_{1}(T)$ are 
$\left(\begin{smallmatrix}
           r & 0 \\
           0 & v
         \end{smallmatrix}\right)$ and
$\left(\begin{smallmatrix}
           -r & 0 \\
           0 & v
         \end{smallmatrix}\right)$.
Observe that the determinant of one of these two matrices is positive. Identifying $MCG(T)$ with $SL(2,\mathbb{Z})$, we conclude that $\phi$ sends the first (resp.\ second) two automorphisms of equation~(\ref{eq:eltsmcgK}) to the matrix $\left( \begin{smallmatrix}1 & 0\\
0 & 1
\end{smallmatrix} \right)$ (resp.\ to $\left( \begin{smallmatrix}-1 & 0\\
0 & -1
\end{smallmatrix} \right)$). In particular, the second part of Proposition~\ref{prop:hopetillmann} does not hold in this case.
\end{remark}

%########################
%########################
\section{Embeddings of mapping class groups}\label{sec:embeddings}

As in all of this paper, the surfaces $N$ and $S$ under consideration are compact and without boundary, and $\pi: S \to N$ is the double covering defined in Section~\ref{sec:odc}. 
%\comm{I suggest the covering $\pi$ should be mentioned at this point} 
If $X$ is a finite $k$-point subset of $N$, $\til{X}=\pi^{-1}(X)$ and $f\in \topp(N;X)$, then we define a map $\rho_k:\topp(N,X)\lra \topp^{+}(S,\til{X})$ by $\rho_{k}(f)=\rho(f)$, where we consider $f$ to be an element of $\topp(N)$ 
and $\rho$ is as defined in Section~\ref{sec:odc}. 
%\comm{and $\rho(f)=\til{f}$ is a choice of a lift of $f$ }. 
By Proposition~\ref{prop:hopetillmann}, the map $\rho_{k}$ induces a homomorphism $\phi_{k}:MCG(N;X)\longrightarrow MCG(S;\til{X})$ defined by $\phi_{k}([f])=[\rho_{k}(f)]$.  On the other hand, the map $\psi_{k}: \topp(N) \ra D_{k}(N)$ (resp.\ $\til{\psi}_{2k}: \topp^{+}(S) \ra D_{2k}(S)$) defined by $\psi_{k}(f)=f(X)$ for all $f\in \topp(N)$ (resp.\ $\til{\psi}_{2k}(h)=h(\til{X})$ for all $h\in \topp^{+}(S)$) is a locally-trivial fibration~\cite{Birman69,McCarty63} whose fibre is $\topp(N;k)$ (resp.\ $\topp^+(S;2k)$). The long exact sequence in homotopy of these fibrations gives rise to 
%for $S=S_{g-1}$ and $N=N_g$, 
the following exact sequences:
\begin{equation*}
B_k(N)\stac{}MCG(N;X) \stac{\tau_{k}} MCG(N),\;\;B_k(S)\stac{}MCG(S;\til{X}) \stac{\til{\tau}_{k}} MCG(S),
\end{equation*}
where $\tau_{k},\; \til{\tau}_{k}$ are induced by suppressing the markings on $X$ and $\til{X}$ respectively~\cite{Birman69}. In Theorem~\ref{th:braidmcg}, we will see that the injectivity of $\phi$ given by Proposition~\ref{prop:hopetillmann}
% \comj{check this!} 
carries over to that of the homomorphism $\phi_{k}$ between mapping class groups of marked surfaces and holds for \emph{all} $g\geq 1$, in contrast to the non injectivity in the case $g=2$ described in Remark~\ref{rem:notinjective}.

%\begin{theorem}\label{th:braidmcg}
%Let $k\in \mathbb{N}$. Let $N$ be a non-orientable surface, and let $S$ be its orientable double covering. 
%% Let $N=N_{g}$, where $g\geq 1$, and let $S=S_{g-1}$ be its orientable double covering. 
%Then the homomorphism $\phi_k:MCG(N;k)\ra MCG(S;2k)$ is injective. 
% %\comj{Check for small values of $k$ if $g=1$} 
%Further, if the genus of $N$ is greater than or equal to $3$ then the following diagram is commutative:
%\begin{equation}\label{eq:mcgdiag}
%\begin{gathered}
%\xymatrix{1\ar[r]&B_k(N)\ar[r]\ar[d]^{\varphi_{k\#}} &MCG(N;k)\ar[r]^{\tau_{k}}\ar[d]^{\phi_k}&MCG(N)\ar[d]^{\phi}\ar[r]&1\\
%1\ar[r]&B_{2k}(S)\ar[r]&MCG(S;2k)\ar[r]^{\til{\tau}_{k}} &MCG(S)\ar[r]&1.}
%\end{gathered}
%\end{equation}
%\end{theorem}

Let $S$ be a compact, connected surface without boundary, let $l\in \mathbb{N}$, and let $X\subset S$ be a subset of cardinality $l$. If $S$ is orientable, let:
\begin{align*}
\topp_{F}^{+}(S;l)&= \{ f\in \topp^{+}(S) \,\left\lvert\, \text{$f(x_{i})=x_{i}$ for all $i=1,\ldots,l$}\right. \}\\
&= \{ f\in \topp^{+}(S;X) \,\left\lvert\, [f]\in PMCG(S;l)\right. \} \; \text{and}\\
\topp^{+}_{\til{F}}(S;2l) &=\left\{ \til{f}\in \topp^{+}(S) \,\left\lvert \text{$\til{f}(\pi^{-1}(x_{j}))=\pi^{-1}(x_{j})$ for all $j=1,\ldots,l$} \right. \right\},
\end{align*}
%\comm{In the last line above $T$ should be $S$}\\
and if $S$ is non orientable, let 
\begin{align*}
\topp_{F}(S;l)&=\{ f\in \topp(S) \,\left\lvert\, \text{$f(x_{i})=x_{i}$ for all $i=1,\ldots,l$}\right. \}\\
&= \{ f\in \topp(S;X) \,\left\lvert\, [f]\in PMCG(S;l)\right. \}.
\end{align*}
Observe that if $S$ is orientable (resp.\ non orientable), $\topp^{+}_{F}(S;1)=\topp^{+}(S;1)$ (resp.\ $\topp_{F}(S;1)=\topp(S;1)$). Before giving the proof of Theorem~\ref{th:braidmcg}, we state and prove the following two results. 
% \comj{This added:}
%\comdj{This is new, and hopefully answers your question about references for a claim that appears in Lemma~\ref{lem:hamstrom}(\ref{it:hamstromb}). We think that it is possible to prove something similar for $K$, but we believe that we don't need it here.}

\begin{proposition}\label{prop:pi1topT}
Let $k\geq 1$, and consider the homomorphism $\til{\psi}_{2k\#}\colon \pi_{1}(\topp^{+}(T)) \ra B_{2k}(T)$ induced by the map $\til{\psi}_{2k}$ defined above. Then $\pi_{1}(\topp^{+}(T))\cong \mathbb{Z}^{2}$, and $\im{\til{\psi}_{2k\#}}=Z(B_{2k}(T))\subset P_{2k}(T)$.
%Let $k\geq 1$, and consider the homomorphism $\psi_{k\#}\colon \pi_{1}(\topp(K)) \ra B_{k}(K)$ (resp.\ $\til{\psi}_{2k\#}\colon \pi_{1}(\topp^{+}(T)) \ra B_{2k}(T)$) induced by the map $\psi_{k}$ (resp.\ $\til{\psi}_{2k}$) defined above. Then $\pi_{1}(\topp(K))\cong \mathbb{Z}$ (resp.\ $\pi_{1}(\topp^{+}(T))\cong \mathbb{Z}^{2}$), and $\im{\psi_{k\#}}=Z(B_{k}(K))\subset P_{k}(K)$ (resp.\ $\im{\til{\psi}_{2k\#}}=Z(B_{2k}(T))\subset P_{k}(T)$).
\end{proposition}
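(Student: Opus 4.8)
The plan is to first identify $\pi_1(\topp^+(T))$, then trace the image of the boundary homomorphism $\til{\psi}_{2k\#}$ through the fibration $\til{\psi}_{2k}\colon \topp^+(T)\to D_{2k}(T)$ and pin down its image inside $B_{2k}(T)$. For the first claim, I would invoke the classical fact that the torus $T$ is a topological group (it is $S^1\times S^1$), so left translations give an injection $T\hookrightarrow \topp^+(T)$, and in fact this inclusion is a homotopy equivalence: the identity component of $\topp^+(T)$ deformation retracts onto the translation subgroup, while $\pi_0\topp^+(T)=MCG(T)\cong SL(2,\mathbb{Z})$ acts on the components. Restricting to the identity component, which is what computes $\pi_1$, I obtain $\pi_1(\topp^+(T))\cong \pi_1(T)\cong \mathbb{Z}^2$. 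I would cite the relevant result (for instance the computation of homeomorphism groups of surfaces) to justify that $\topp^+(T)_0\simeq T$.

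The heart of the argument is identifying the image of $\til{\psi}_{2k\#}$. The map $\til{\psi}_{2k}$ sends $\til f \mapsto \til f(\til X)$, so composing the inclusion $T\hookrightarrow \topp^+(T)_0$ (sending a point $t$ to the translation by $t$) with $\til{\psi}_{2k}$ yields the map $T\to D_{2k}(T)$ that moves the whole configuration $\til X$ rigidly by $t$. On fundamental groups, a loop of translations tracing out a generator of $\pi_1(T)\cong\mathbb{Z}^2$ produces the braid in which all $2k$ strings travel in parallel along the corresponding generating loop of $T$. These ``diagonal'' or ``full-twist-along-a-generator'' braids are precisely the standard generators of the center $Z(B_{2k}(T))$: it is known that for $k\geq 1$ the center of $B_{2k}(T)$ is free abelian of rank $2$, generated by the two classes in which every string follows a meridian, respectively a longitude, of the torus. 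Since all strings move in parallel and return to their starting points, these braids are pure, giving the inclusion $\im{\til{\psi}_{2k\#}}\subset P_{2k}(T)$, and they are central because a rigid translation commutes with any braiding supported in a disc. I would therefore argue that $\til{\psi}_{2k\#}$ maps the two generators of $\mathbb{Z}^2$ onto these two central generators.

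To conclude that the image is exactly $Z(B_{2k}(T))$ and not a proper subgroup, I would check that the induced map $\mathbb{Z}^2\cong\pi_1(\topp^+(T))\to Z(B_{2k}(T))\cong\mathbb{Z}^2$ is an isomorphism. This reduces to verifying it is injective with the right image, which follows by comparing the two $\mathbb{Z}^2$ bases: the generating translation loops of $T$ map bijectively onto the two generating central braids described above, so the homomorphism is surjective onto $Z(B_{2k}(T))$, and a rank count (both groups being $\mathbb{Z}^2$) forces it to be an isomorphism onto the center. I anticipate that the main obstacle will be the precise identification of the center $Z(B_{2k}(T))$ and the verification that the parallel-transport braids genuinely generate it rather than a finite-index subgroup; this requires either citing a known presentation of $B_{2k}(T)$ and its center, or analysing the fibration $D_{2k}(T)\to T$ (evaluation at a rigid translation orbit) and the resulting splitting. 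The appendix presentations of $P_2(T)$ and $B_2(T)$ mentioned in the introduction suggest the authors intend to handle the low-rank center computation by explicit presentation, so I would follow suit, treating the center and its generators concretely.
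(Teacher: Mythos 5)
Your proposal is correct and follows the same overall skeleton as the paper's proof --- establish $\pi_{1}(\topp^{+}(T))\cong\mathbb{Z}^{2}$, observe that loops based at $\id_{T}$ produce pure braids, then identify the image with $Z(B_{2k}(T))$ --- but the key middle step is carried out by a genuinely different mechanism. The paper identifies $\im{\til{\psi}_{2k\#}}$ purely by citation: since the image lies in $P_{2k}(T)$, the homomorphism coincides with the one studied by Birman, and the exact sequence of \cite[Theorem~1]{Birman69} together with \cite[Corollary~1.3]{Birman69} gives $\im{\til{\psi}_{2k\#}}=\left\langle a_{1},b_{1}\right\rangle$, which is the centre by \cite[Proposition~4.2]{PR}. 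You instead compute the image geometrically via the topological group structure of $T$: translation loops generate $\pi_{1}(\topp^{+}(T))$, and they map under $\til{\psi}_{2k\#}$ to the two ``diagonal'' braids in which all $2k$ strings run in parallel around the two generators of $\pi_{1}(T)$; these are exactly the known generators of the centre. Your route is more self-contained (in effect you reprove the torus case of Birman's Corollary~1.3), whereas the paper's is shorter by outsourcing that identification. One point you must make precise: Hamstrom's theorem, as cited in the paper, gives only the abstract isomorphism $\pi_{1}(\topp^{+}(T))\cong\mathbb{Z}^{2}$, while your argument needs the stronger fact that the translation loops generate it. This is easily patched without invoking the full homotopy equivalence $\topp^{+}(T)_{0}\simeq T$: evaluation at a basepoint is a fibration $\topp^{+}(T)\to T$ admitting the translation section $s$, so the induced map on $\pi_{1}$ is a split surjection $\mathbb{Z}^{2}\to\mathbb{Z}^{2}$, hence an isomorphism because $\mathbb{Z}^{2}$ is Hopfian, and therefore $s_{\#}$ is an isomorphism. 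With that patch, and with your appeal to the ``known'' description of $Z(B_{2k}(T))$ turned into the citation \cite[Proposition~4.2]{PR}, your argument is complete; your separate centrality remark (rigid translations commute with braiding in a disc) is then not even needed, though it is correct.
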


\begin{proof}
First, by~\cite[Theorem~2, p.~63]{Hamstrom65}), $\pi_{1}(\topp^{+}(T))\cong \bz^{2}$. Secondly, taking $\topp^{+}(T)$ to be equipped with $\id_{T}$ as its basepoint, a representative loop of an element $\gamma\in \pi_{1}(\topp^{+}(T),\id_{T})$ is a path in $\topp^{+}(T)$ from $\id_{T}$ to itself. It follows from the definition of $\til{\psi}_{2k}$ that $\til{\psi}_{2k\#}(\gamma)\in P_{2k}(T)$. It remains to show that $\im{\til{\psi}_{2k\#}}=Z(B_{2k}(T))$. Since $\im{\til{\psi}_{2k\#}}\subset P_{2k}(T)$, the homomorphism $\til{\psi}_{2k\#}$ coincides with the homomorphism of~\cite[Theorem~1]{Birman69}. Using the exact sequence of that theorem and~\cite[Corollary~1.3]{Birman69}, we see that $\im{\til{\psi}_{2k\#}}=\left\langle a_{1},b_{1} \right\rangle$, where $a_{1}$ and $b_{1}$ are the generators of $P_{2k}(T)$ defined in~\cite[Theorem~5]{Birman69a}. But by~\cite[Proposition~4.2]{PR}, these two elements generate the centre of $B_{2k}(T)$ as required.
\end{proof}

The first part of the following lemma generalises results of~\cite{Hamstrom64}.
\begin{lemma}\label{lem:hamstrom}\mbox{}
\begin{enumerate}[(a)]
\item\label{it:hamstroma} Let $S$ be a compact, connected orientable (resp.\ non-orientable) surface without boundary for which $\pi_{1}(\topp^{+}(S;1))$, (resp.\ $\pi_{1}(\topp(S;1))$) is trivial, and let $l\geq 1$. Then $\pi_{1}(\topp^{+}_{F}(S;l))$, (resp.\ $\pi_{1}(\topp_{F}(S;l))$) is trivial. In particular, $\pi_{1}(\topp^{+}_{F}(T;l))$ (resp.\ $\pi_{1}(\topp_{F}(K;l))$) is trivial for all $l\geq 1$.
\item\label{it:hamstromb} $\pi_{1}(\topp^{+}_{\til{F}}(T;2l))$ is trivial for all $l\geq 1$.
\end{enumerate}
\end{lemma}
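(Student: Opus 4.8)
The plan is to deduce both statements from the hypothesis about the one-point mapping class spaces by an inductive unravelling of the forgetful fibrations that successively free up marked points. The key tool is the Fadell--Neuwirth-type evaluation fibration: for a surface $S$ and a chosen set of $l$ marked points, evaluating a homeomorphism fixing the first $l-1$ points at the $l$\textsuperscript{th} point gives a locally-trivial fibration whose total space is $\topp^{+}_{F}(S;l-1)$ (the homeomorphisms fixing the first $l-1$ points), whose base is $S$ minus $l-1$ points, and whose fibre is $\topp^{+}_{F}(S;l)$. I would assemble the long exact sequence in homotopy of this fibration, the relevant segment being
\begin{equation*}
\pi_{2}(S \setminus \{x_{1},\ldots,x_{l-1}\}) \longrightarrow \pi_{1}(\topp^{+}_{F}(S;l)) \longrightarrow \pi_{1}(\topp^{+}_{F}(S;l-1)) \longrightarrow \pi_{1}(S \setminus \{x_{1},\ldots,x_{l-1}\}).
\end{equation*}

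For part~(\ref{it:hamstroma}), the base of the fibration is the torus (or Klein bottle) with at least one puncture, which is an open surface and therefore homotopy equivalent to a wedge of circles; hence its $\pi_{2}$ vanishes and its $\pi_{1}$ is free (in particular torsion-free). Starting from the hypothesis that $\pi_{1}(\topp^{+}_{F}(S;1))=\pi_{1}(\topp^{+}(S;1))$ is trivial, the exact sequence above with $l=2$ reads
\begin{equation*}
0 \longrightarrow \pi_{1}(\topp^{+}_{F}(S;2)) \longrightarrow \pi_{1}(\topp^{+}_{F}(S;1)) = 1,
\end{equation*}
so $\pi_{1}(\topp^{+}_{F}(S;2))$ is trivial; iterating the same argument with $l=3,4,\ldots$ gives triviality for all $l\geq 1$ by induction. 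The parenthetical claim for $T$ and $K$ then follows because $\pi_{1}(\topp^{+}(T;1))$ and $\pi_{1}(\topp(K;1))$ are known to be trivial (the former from the identity component of $\topp^{+}(T)$ being homotopy equivalent to $T$ together with the one-puncture evaluation, the latter from the corresponding computation for the Klein bottle), which supplies the base case.

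For part~(\ref{it:hamstromb}), I would use the analogous fibration but now freeing up \emph{pairs} of points $\pi^{-1}(x_{j})$ in a symmetric fashion. Concretely, restricting attention to homeomorphisms that preserve each fibre $\pi^{-1}(x_{j})$ setwise, evaluation at the pair $\pi^{-1}(x_{l})$ exhibits $\topp^{+}_{\til{F}}(T;2(l-1))$ as the total space of a fibration over the (unordered, or two-point) configuration space of $T$ minus $2(l-1)$ points, with fibre $\topp^{+}_{\til{F}}(T;2l)$. Since $T$ with any positive number of punctures is again an open surface with free fundamental group and trivial $\pi_{2}$, the same configuration base contributes nothing obstructive, and the identical inductive collapse applies once the base case $\pi_{1}(\topp^{+}_{\til{F}}(T;2))$ is settled.

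The main obstacle I expect lies precisely in this base case and in pinning down the correct base space for part~(\ref{it:hamstromb}). The subtlety is that $\topp^{+}_{\til{F}}(T;2l)$ constrains homeomorphisms to preserve each two-point fibre of the covering $\pi\colon T\to K$ but does \emph{not} require them to fix the two points individually, so the evaluation map lands in a configuration space of a pair of points rather than in $T$ itself, and one must check carefully that this configuration space still has the homotopy type ensuring a free (hence torsion-free) $\pi_{1}$ and vanishing $\pi_{2}$. Establishing triviality of $\pi_{1}(\topp^{+}_{\til{F}}(T;2))$ directly — comparing it against $\pi_{1}(\topp^{+}(T))\cong\bz^{2}$ from Proposition~\ref{prop:pi1topT} via the evaluation of a symmetric point-pair and arguing that the map $\pi_{1}(\topp^{+}_{\til{F}}(T;2))\to\pi_{1}(\topp^{+}(T))$ is injective with trivial image — is where the real work sits; the inductive step itself is then formal.
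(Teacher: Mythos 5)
Your proposal is correct and follows the paper's own proof essentially step for step: the same evaluation fibrations and induction on the number of marked points for part~(\ref{it:hamstroma}) (with Hamstrom's computations for $T$ and $K$ supplying the base case), and for part~(\ref{it:hamstromb}) the same fibration of $\topp^{+}_{\til{F}}(T;2l)$ over the unordered configuration space $D_{2}(T\setminus\{x_{1},\ldots,x_{2l}\})$, whose asphericity kills the obstruction, with the base case $l=1$ handled by comparison with Proposition~\ref{prop:pi1topT}. The one step you flag as ``where the real work sits'' is in fact immediate from that proposition together with exactness: $\pi_{1}(\topp^{+}(T))\cong\bz^{2}$ surjects onto $Z(B_{2}(T))\cong\bz^{2}$, and a surjection between free Abelian groups of the same finite rank is an isomorphism, so $\pi_{1}(\topp^{+}(T))\to B_{2}(T)$ is injective and the (already injective) map $\pi_{1}(\topp^{+}_{\til{F}}(T;2))\to\pi_{1}(\topp^{+}(T))$ has trivial image, forcing $\pi_{1}(\topp^{+}_{\til{F}}(T;2))=1$.
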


\begin{proof}\mbox{}
\begin{enumerate}[(a)]
\item Assume first that $S$ is orientable. %\comdj{This rewritten slightly:} 
We prove the result by induction on $l$. If $l=1$ then the result holds by the hypothesis, using the fact that $\topp^{+}_{F}(S;1)=\topp^{+}(S;1)$. Suppose by induction that the result holds for some $l\geq 1$. The map $\topp^{+}_{F}(S;l)\to S\setminus\{ x_{1},\ldots,x_{l} \}$ given by evaluating an element of $\topp^{+}_{F}(S;l)$ at the point $x_{l+1}$ is a fibration with fibre $\topp^{+}_{F}(S;l+1)$. Taking the long exact sequence in homotopy of this fibration and using the fact that $\pi_{2}(S\setminus\{ x_{1},\ldots,x_{l} \})$ is trivial, it follows that the homomorphism $\pi_{1}(\topp^{+}_{F}(S;l+1)) \to \pi_{1}(\topp^{+}_{F}(S;l))$ induced by the inclusion of the fibre in the total space is injective, and since $\pi_{1}(\topp^{+}_{F}(S;l))$ is trivial by the induction hypothesis, $\pi_{1}(\topp^{+}_{F}(S;l+1))$ is too. This proves the first part of the statement if $S$ is orientable. If $S$ is non orientable, it suffices to replace $\topp^{+}_{F}(S;l)$ by $\topp_{F}(S;l)$ in the proof of the orientable case. The second part of the statement is a consequence of the first part and~\cite[Corollary, p.~65]{Hamstrom65} and~\cite[Theorem~4.1]{Hamstrom64}. 
%\comm{yes, I checked and it should be noted that the argument in [16] (for the Klein bottle) %relies on that of [17] (for the torus) which is not given explicitly...}\\
%\comm{By the way: the paper [17]  appeared first in 1963 on Notices of AMS}
\item Let $l\geq 0$. The map $\topp^{+}_{\til{F}}(T;2l) \to D_{2}(T \setminus \{ x_{1},\ldots, x_{2l}) \}$ that to a homeomorphism $f\in \topp^{+}_{\til{F}}(T;2l)$ associates the set $\{ f(x_{2l+1}), f(x_{2l+2}) \}$ is a fibration whose fibre is $\topp^{+}_{\til{F}}(T;2l+2)$. If $l=0$ then $\topp^{+}_{\til{F}}(T;2l)$ is just $\topp^{+}(T)$. Taking the long exact sequence in homotopy, and using the fact that $D_{2}(T \setminus \{ x_{1},\ldots, x_{2l} \})$ is a $K(\pi,1)$, we obtain the following exact sequence:
\begin{equation}\label{eq:ftilde}
1\to \pi_{1}(\topp^{+}_{\til{F}}(T;2l+2)) \xrightarrow{(\til{q}_{2l+2})_{\#}} \pi_{1}(\topp^{+}_{\til{F}}(T;2l)) \to B_{2}(T \setminus \{ x_{1},\ldots, x_{2l} \}),
\end{equation}
where the homomorphism $(\til{q}_{2l+2})_{\#}$ is induced by the map $\til{q}_{2l+2}\colon \topp^{+}_{\til{F}}(T;2l+2) \to \topp^{+}_{\til{F}}(T;2l)$
that forgets the marking on the last two points. If $\pi_{1}(\topp^{+}_{\til{F}}(T;2l))$ is trivial for some $l\geq 1$ then clearly $\pi_{1}(\topp^{+}_{\til{F}}(T;2l+2))$ is also trivial. So applying induction on $l$, it suffices to prove the result for $l=1$. %\comdj{at this point, you asked  for some references for certain claims, these are now in the above proposition.} 
It follows from Proposition~\ref{prop:pi1topT} that the map $\pi_{1}(\topp^{+}_{\til{F}}(T))\to B_{2}(T)$ sends $\pi_{1}(\topp^{+}_{\til{F}}(T))$ isomorphically onto the centre of $B_{2}(T)$. It follows from exactness of~(\ref{eq:ftilde}) that $\pi_{1}(\topp^{+}_{\til{F}}(T;2))$ is trivial, and this completes the proof of the lemma.\qedhere
%But $\pi_{1}(\topp^{+}(T))$ 
%is isomorphic to $\bz^{2}$ by~\cite[Theorem~2, p.~63]{Hamstrom65}, and it may be identified with the centre of $B_{2}(T)$, which in fact lies in $P_{2}(T)$ (\comm{a reference for this two last claims?}), so the map $\pi_{1}(\topp^{+}_{\til{F}}(T))\to B_{2}(T)$ sends $\pi_{1}(\topp^{+}_{\til{F}}(T))$ isomorphically onto the centre of $B_{2}(T)$. It follows from exactness of~(\ref{eq:ftilde}) that $\pi_{1}(\topp^{+}_{\til{F}}(T;2))$ is trivial, and this completes the proof of the lemma.\qedhere
\end{enumerate}
\end{proof}

\begin{remark}
%\comdj{This was in the proof of the lemma, but it was not in fact used in the proof.} 
If $S$ is a surface that satisfies the hypotheses of Lemma~\ref{lem:hamstrom}(\ref{it:hamstroma}) then $S$ is different from the $2$-sphere~\cite[lines~2--3, p.~303]{McCarty63} and the real projective plane~\cite[Theorem~3.1]{Hamstrom64}, so $S$ is an Eilenberg Mac~Lane space of type $K(\pi,1)$.
\end{remark}

\begin{proof}[Proof of Theorem~\ref{th:braidmcg}] Consider the double covering $\pi:S \ra N$, fix a $k$-point subset $X$ of $N$, and let $\til{X}=\pi^{-1}(X)$. By the comments preceding the statement of the theorem, we obtain the following commutative diagram of fibrations:
\begin{equation}\label{eq:topdiag}
\begin{gathered}
\xymatrix{\topp(N;X) \ar[r]\ar[d]^{\rho_{k}} & \topp(N) \ar[r]^{\psi_{k}}\ar[d]^{\rho} & D_{k}(N) \ar[d]^{\varphi_{k}}\\
\topp^+(S;\til{X}) \ar[r] & \topp^{+}(S) \ar[r]^{\til{\psi}_{2k}} & D_{2k}(S),}
\end{gathered}
\end{equation}
where $\rho$ is as defined in Section~\ref{sec:odc}, and $\varphi_{k}: D_{k}(N) \longrightarrow D_{2k}(S)$ is given by equation~(\ref{eq:liftconfig}). The left-hand square clearly commutes because $\rho_{k}$ is the restriction of $\rho$ to $\topp(N;k)$. We claim that the right-hand square also commutes. On the one hand,
%\comj{this expanded:} 
$\pi(\til{\psi}_{2k}\circ \rho(f))=\pi\circ \rho(f)(\til{X})= f\circ \pi(\til{X})=f(X)$, using the fact that $\rho(f)$ is a lift of $f$, so $\til{\psi}_{2k}\circ \rho(f)=\rho(f)(\til{X}) \subset \pi^{-1}(f(X))=\varphi_{k}\circ \psi_{k}(f)$. Conversely, if $y\in \pi^{-1}(f(X))$, then there exists $x\in X$ such that $\pi(y)=f(x)$. If $\til{x}\in \til{X}$ is a lift of $x$ then $y\in \{\rho(f)(\til{x}), J\circ \rho(f)(\til{x})\}=\{\rho(f)(\til{x}), \rho(f) (J(\til{x}))\}\subset \rho(f)(\til{X})$, which proves the claim. Let $\Phi_{1}\colon \pi_{1}\topp{(N)} \to \pi_{1}\topp^{+}(S)$ denote the homomorphism induced by $\rho$ on the level of fundamental groups. We now take the long exact sequence in homotopy of the commutative diagram~(\ref{eq:topdiag}). The form of the resulting commutative diagram depends on the genus $g$ of $N$, hence we consider the following three cases.
\begin{enumerate}[(a)]
\item Suppose that $g\geq 3$. Then by~\cite{Hamstrom66}, 
%\comj{Is the reference sufficient?}, 
$\pi_1 \topp(N)$ and $\pi_1 \topp^+(S)$ are trivial, and we obtain the commutative diagram of~(\ref{eq:mcgdiag}). Since $\varphi_{k\#}$ (resp.\ $\phi$) is injective by~\cite{GoncalvesGuaschi12} (resp.\ by Proposition~\ref{prop:hopetillmann}), the injectivity of $\phi_{k}$ follows from the 5-Lemma.

\item Suppose that $g=1$. Using~\cite{GoncalvesGuaschi12,Scott}, we obtain the following commutative diagram of short exact sequences:
\begin{equation}\label{eq:fnpnp2}
\begin{tikzcd}
1 \arrow{r} &\underbrace{\mathbb{Z}/2\mathbb{Z}}_{\pi_1 \topp(\rp{2})} \arrow{r} \arrow{d}{\Phi_{1}} & B_k(\rp{2}) \arrow{r}\arrow{d}{\varphi_{k\#}} & MCG(\rp{2};k) \arrow{r} \arrow{d}{\phi_k} & 1\\
1 \arrow{r} &\underbrace{\mathbb{Z}/2\mathbb{Z}}_{\pi_1 \topp^+(\stwo)} \arrow{r} & B_{2k}(\stwo) \arrow{r} & MCG(\stwo ;2k) \arrow{r} & 1,
\end{tikzcd}
\end{equation}
where in both cases, $\bz/2\bz$ is identified with the subgroup generated by the full-twist braid of the corresponding braid group. Since $\varphi_{k\#}$ is injective by~\cite{GoncalvesGuaschi12}, so is $\Phi_{1}$, and a routine diagram-chasing argument shows that $\phi_{k}$ is injective.

\item Finally suppose that $g=2$, so that $N$ is the Klein bottle $K$ and $S$ is the torus $T$. Let $k\geq 1$. We claim that $\ker{\phi_{k}}\subset PMCG(K;k)$. To prove this, let $X=\{ x_{1}, \ldots, x_{k}\}$.
%, and for $j=1,\ldots, k$, let $\pi^{-1}(x_{j})=\{ y_{j},z_{j} \}$. 
Let $\sigma\colon MCG(K;k) \to \Sigma_{k}$ and $\til{\sigma}\colon MCG(T;2k) \to \Sigma_{2k}$ denote the usual homomorphisms onto the symmetric groups of $X$ and $\til{X}$ respectively as described in equation~(\ref{eq:sigmamcg}). From the geometric construction of $\rho_{k}$, if $1\leq i,j\leq k$ and $\sigma([f])(x_{i})=x_{j}$ then $\til{\sigma}([\rho_{k}(f)])(y_{i}) \in \pi^{-1}(x_{j})$. If $[f]\in \ker{\phi_{k}}$, where $f\in \topp(K,X)$, then $[\rho_{k}(f)]=\phi_{k}([f])=[\operatorname{\text{Id}}_{S}]$. In particular, $\til{\sigma}([\rho_{k}(f)])= \operatorname{\text{Id}}_{S_{2k}}$, so for all $1\leq i\leq k$, $\til{\sigma}([\rho_{k}(f)])(y_{i}) =y_{i}$, and it follows that $\sigma([f])(x_{i})=x_{i}$, whence $[f]\in PMCG(K;k)$ as claimed. It thus suffices to prove that the restriction $\phi_{k}\left\lvert_{PMCG(K;k)}\right.$ of $\phi_{k}$ to $PMCG(K;k)$ is injective. 
Now assume that $k\geq 2$. Let $p_{k}\colon F_{k}(K)\to F_{k-1}(K)$ (resp.\ $q_{k}\colon \topp_{F}(K;k) \to \topp_{F}(K;k-1)$) be the map given by forgetting the last coordinate (resp.\ the marking on the last point). Let $D_{2k}^{(2)}(T)= F_{2k}(T)/\underbrace{\mathbb{Z}_{2} \times\cdots\times \mathbb{Z}_{2}}_{\text{$k$ times}}$, let $B_{2k}^{(2)}(T)=\pi_{1}D_{2k}^{(2)}(T)$.
%, and let
%\begin{equation*}
%\topp^{+}_{\til{F}}(T;2k)=\left\{ \til{f}\in \topp^{+}(T) \,\left\lvert \text{$\til{f}(\pi^{-1}(x_{j}))=\pi^{-1}(x_{j})$ for all $j=1,\ldots,k$} \right. \right\}.
%\end{equation*}
Let $\til{p}_{2k}\colon D_{2k}^{(2)}(T)\to D_{2(k-1)}^{(2)}(T)$ be the map given by forgetting the last two coordinates, and let $\til{q}_{2k}\colon \topp^{+}_{\til{F}}(T;2k) \to \topp^{+}_{\til{F}}(T;2(k-1))$ be the map defined in the proof of Lemma~\ref{lem:hamstrom}(\ref{it:hamstromb}).
% \comm{This map has the same notation than that of in Eq.(3.1); should it have a different notation?} 
Then we have the following commutative diagram whose rows are fibrations:
\begin{equation}\label{eq:bigcommdiag}
\begin{gathered}
\xymatrix{%
\topp^{+}_{\til{F}}(T;2(k-1)) \ar[r] & \topp^{+}(T) \ar[r]^{\til{\delta}_{2(k-1)}} & D_{2(k-1)}^{(2)}(T)\\
\topp^{+}_{\til{F}}(T;2k) \ar[r] \ar[u]_{\til{q}_{2k}} & \topp^{+}(T) \ar@{=}[u] \ar[r]^{\til{\delta}_{2k}} & D_{2k}^{(2)}(T) \ar[u]_{\til{p}_{2k}}\\
\topp_{F}(K;k) \ar[r] \ar[u]_{\rho_{k}\left\lvert_{\topp_{F}(K;k)}\right.} \ar[d]^{q_{k}} & \topp(K) \ar[u]_{\rho} \ar@{=}[d] \ar[r]^{\delta_{k}} & F_{k}(K) \ar[u]_{\widehat{\varphi}_{k}} \ar[d]^{p_{k}}\\
\topp_{F}(K;k-1) \ar[r] \ar[d]^{\rho_{k-1}\left\lvert_{\topp_{F}(K;k-1)}\right.} & \topp(K) \ar[d]^{\rho} \ar[r]^{\delta_{k-1}} & F_{k-1}(K)\ar[d]^{\widehat{\varphi}_{k-1}} \\
\topp^{+}_{\til{F}}(T;2(k-1)) \ar[r] & \topp^{+}(T) \ar[r]^{\til{\delta}_{2(k-1)}} & D_{2(k-1)}^{(2)}(T).
}
\end{gathered}
\end{equation}
The map $\delta_{k}\colon \topp(K)\to F_{k}(K)$ (resp.\ $\til{\delta}_{2k}\colon \topp^{+}(T) \to D_{2k}^{(2)}(T)$) is defined by:
\begin{equation*}
\text{$\delta_{k}(f)= (f(x_{1}), \ldots, f(x_{k}))$ (resp.\ $\til{\delta}_{2k}(\til{f})=\bigl( \til{f}(\pi^{-1}(x_{1})), \ldots,\til{f}(\pi^{-1}(x_{k}))\bigr)$)} 
\end{equation*}
for all $f\in \topp(K)$ (resp.\ for all $\til{f}\in \topp^{+}(T)$),
%$\til{\delta}_{2k}(\til{f})=(\{ \til{f}(y_{1}),\til{f}(z_{1})\}, \ldots, \{ \til{f}(y_{k}),\til{f}(z_{k})\}  )$)
and the map $\widehat{\varphi}_{k}\colon F_{k}(K) \to D_{2k}^{(2)}(T)$ is defined by $\widehat{\varphi}_{k}(v_{1},\ldots, v_{k})= (\pi^{-1}(v_{1}), \ldots, \pi^{-1}(v_{k}))$ for all $(v_{1},\ldots, v_{k})\in F_{k}(K)$. Note also that the diagram remains commutative if we identify the corresponding terms of the first and last rows.
%We thus have:
%\begin{equation}\label{eq:lastcolumn}
%\til{q}_{2k}\circ \rho= \rho\circ q_{k}. \comj{restriction?}
%\til{p}_{2k}\circ \widehat{\varphi}_{k}=\widehat{\varphi}_{k-1}\circ p_{k}.
%\end{equation}
Taking the long exact sequence in homotopy of the diagram~(\ref{eq:bigcommdiag}) and applying Lemma~\ref{lem:hamstrom}, we obtain the following commutative diagram whose rows are exact:
\begin{equation}\label{eq:bigcommdiagexact}
\raisebox{-0.5\height}{\includegraphics{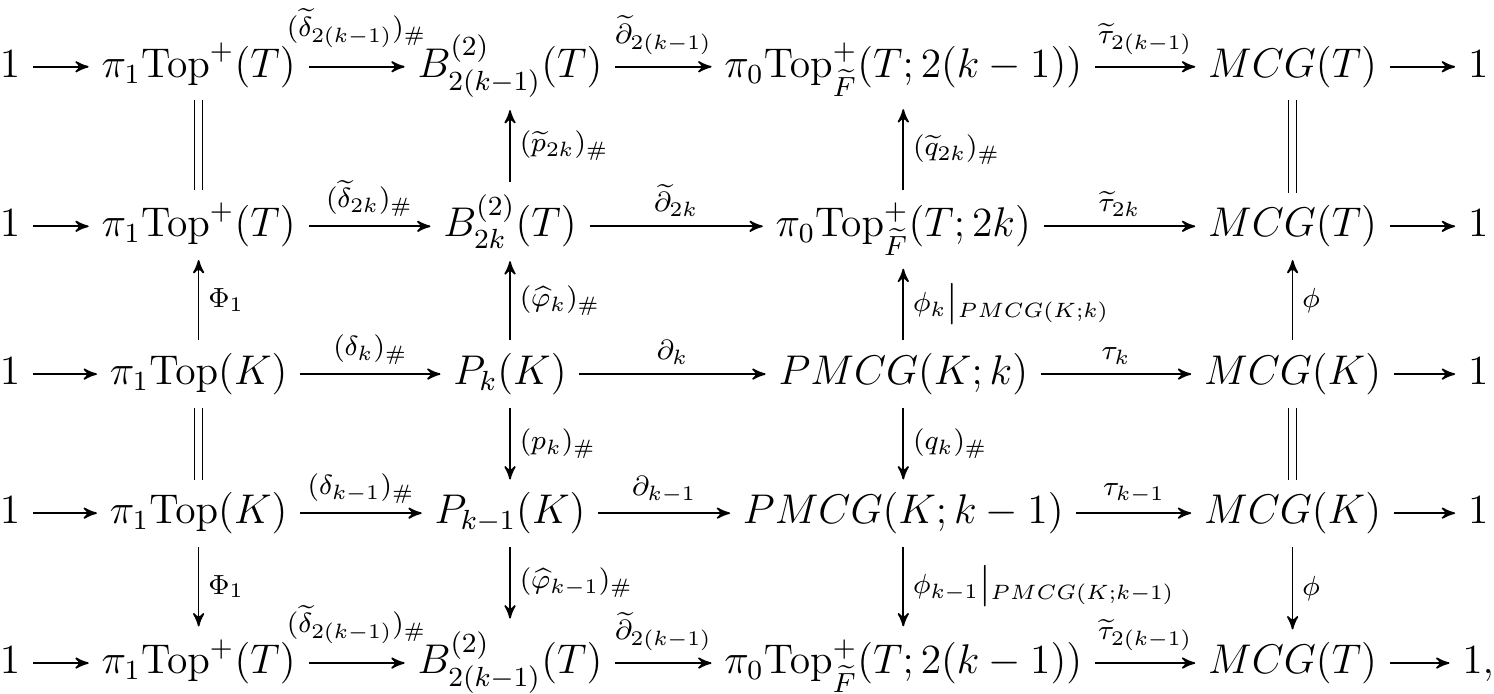}}
\end{equation}
where $\til{\partial}_{2(k-1)},\til{\partial}_{2k},\partial_{k}$ and $\partial_{k-1}$ are the boundary homomorphisms associated with the corresponding fibrations of~(\ref{eq:bigcommdiag}), $\til{\tau}_{2(k-1)}$, $\til{\tau}_{2k}$, $\tau_{k-1}$ and $\tau_{k}$ are the restrictions of these homomorphisms to the given groups, $(\til{q}_{2k})_{\#}$ and $(q_{k})_{\#}$ are the maps induced by $\til{q}_{2k}$ and $q_{k}$ respectively on the level of $\pi_{0}$, and $(\til{\delta}_{2(k-1)})_{\#}$, $(\til{\delta}_{2k})_{\#}$, $(\delta_{k})_{\#}$ and $(\delta_{k-1})_{\#}$ are the homomorphisms induced by $\til{\delta}_{2(k-1)}, \til{\delta}_{2k}, \delta_{k}$ and $\delta_{k-1}$ on the level of $\pi_{1}$. The homomorphism $\phi$ is given by Proposition~\ref{prop:hopetillmann}, and $(\widehat{\varphi}_{k})_{\#}$ is the restriction of $\varphi_{k\#}$ to $P_{k}(K)$, so is injective by~\cite{GoncalvesGuaschi12}.  
%\comdj{We believe that this remark is true, but we do not see where we use it, and we propose to remove it altogether.}\comm{I agree} 
We remark that $\pi_{1}\topp(K)\cong \mathbb{Z}$ (resp.\ $\pi_{1}\topp^{+}(T)\cong \mathbb{Z}^{2}$), and may be identified with the centre of $B_{k}(K)$ (resp.\ of $B_{2k}(T)$), which in fact lies in $P_{k}(K)$ (resp.\ in $P_{2k}(T)$).
%\comm{This means that $\pi_1\topp(K)=Z(P_k(K))$ and $\pi_1\topp^+(T)=Z(P_k(T))$ as stated %later?}. 
Once more, the diagram~(\ref{eq:bigcommdiagexact}) remains commutative if we identify the corresponding groups of the first and last rows. In particular, the second and third columns yield respectively:
\begin{align}
%\til{q}_{2k}\circ \rho= \rho\circ q_{k}. \comj{restriction?}
(\til{p}_{2k})_{\#}\circ (\widehat{\varphi}_{k})_{\#}&=(\widehat{\varphi}_{k-1})_{\#}\circ (p_{k})_{\#}\label{eq:lastcolumn}\\
(\til{q}_{2k})_{\#} \circ \phi_{k}\left\lvert_{PMCG(K;k)}\right. &=\phi_{k-1}\left\lvert_{PMCG(K;k-1)}\right. \circ (q_{k})_{\#}. \label{eq:thirdcolumn}
\end{align}
We recall at this point that our aim is to show that the homomorphism $\phi_{k}\left\lvert_{PMCG(K;k)}\right.$ is injective. %\comdj{This rewritten:} 
Note that by diagram~(\ref{eq:bigcommdiagexact}), $(\widehat{\varphi}_{k})_{\#}\circ (\delta_{k})_{\#}=(\til{\delta}_{2k})_{\#}$, and it follows from the exactness of~(\ref{eq:bigcommdiagexact}) and the injectivity of $(\widehat{\varphi}_{k})_{\#}$ that $\Phi_{1}$ is also injective. 
We claim that the restriction
\begin{equation}\label{eq:iso1}
\til{\partial}_{2k}\left\lvert_{\ker{(\til{p}_{2k})_{\#}}} \right.\colon \ker{(\til{p}_{2k})_{\#}} \to \ker{(\til{q}_{2k})_{\#}}
\end{equation}
is an isomorphism. Indeed:
% \comj{the details have been added here:}
\begin{enumerate}[(i)]
\item it is well defined. Indeed, if $x\in \ker{(\til{p}_{2k})_{\#}}$ then by the commutative diagram~(\ref{eq:bigcommdiagexact}), $(\til{q}_{2k})_{\#}\circ \til{\partial}_{2k}(x)= \til{\partial}_{2(k-1)}\circ (\til{p}_{2k})_{\#}(x)=1$, so $\til{\partial}_{2k}(x)\in \ker{(\til{q}_{2k})_{\#}}$.

\item suppose that $x\in \ker{\til{\partial}_{2k}\left\lvert_{\ker{(\til{p}_{2k})_{\#}}} \right.}$. By exactness of the second row of the diagram~(\ref{eq:bigcommdiagexact}), there exists $z\in \pi_{1}\topp^{+}(T)$ such that $(\til{\delta}_{2k})_{\#}(z)=x$, and by commutativity of the same diagram, %\comdj{this corrected:}
 $(\til{\delta}_{2(k-1)})_{\#}(z)= (\til{p}_{2k})_{\#}\circ (\til{\delta}_{2k})_{\#}(z)=(\til{p}_{2k})_{\#}(x)=1$. But $(\til{\delta}_{2(k-1)})_{\#}$ %(\comm{I think it should be $(\til{\delta}_{2(k-1)})_{\#}$})
 is injective, so $z=1$, whence $x=1$. Thus $\til{\partial}_{2k}\left\lvert_{\ker{(\til{p}_{2k})_{\#}}} \right.$ is injective too.

\item let $y\in \ker{(\til{q}_{2k})_{\#}}$. Using commutativity and exactness of the second row of the diagram~(\ref{eq:bigcommdiagexact}), we have $y\in \ker{\til{\tau}_{2k}}$, so there exists $w\in B_{2k}^{(2)}(T)$ such that $\til{\partial}_{2k}(w)=y$, and thus $(\til{p}_{2k})_{\#}(w)\in \ker{\til{\partial}_{2(k-1)}}$. Hence there exists $z\in \pi_{1}\topp^{+}(T)$ such that $(\til{p}_{2k})_{\#}(w)=(\til{\delta}_{2(k-1)})_{\#}(z)=(\til{p}_{2k})_{\#}\circ (\til{\delta}_{2k})_{\#}(z)$. It follows that $w (\til{\delta}_{2k})_{\#}(z^{-1})\in \ker{(\til{p}_{2k})_{\#}}$, and that $\til{\partial}_{2k}(w (\til{\delta}_{2k})_{\#}(z^{-1}))= \til{\partial}_{2k}(w)=y$ by exactness of the second row of~(\ref{eq:bigcommdiagexact}), which proves that $\til{\partial}_{2k}\left\lvert_{\ker{(\til{p}_{2k})_{\#}}} \right.$ is surjective.
\end{enumerate}
In a similar manner, one may show that the restriction
\begin{equation}\label{eq:iso2}
\partial_{k}\left\lvert_{\ker{(p_{k})_{\#}}} \right.\colon \ker{(p_{k})_{\#}} \to \ker{(q_{k})_{\#}}
\end{equation}
is also an isomorphism,
%\comm{I added some $\#$ symbols...} 
and if $w\in \ker{(q_{k})_{\#}}$ then $(\til{q}_{2k})_{\#}(\phi_{k}(w))= \phi_{k-1}((q_{k})_{\#}(w))=1$ by equation~(\ref{eq:thirdcolumn}), and so $\phi_{k}(w)\in \ker{(\til{q}_{2k})_{\#}}$. We thus obtain the following commutative diagram of short exact sequences:
\begin{equation}\label{eq:mcgdiag4}
\begin{gathered}
\xymatrix@1{%
1\ar[r]&\ker{(q_{k})_{\#}}\ar[r] \ar[d]^{\phi_{k}\left\lvert_{\ker{(q_{k})_{\#}}}\right.} & PMCG(K;k) \ar[r]^(0.45){(q_{k})_{\#}} \ar[d]^{\phi_{k}\left\lvert_{PMCG(K;k)}\right.} & PMCG(K;k-1) \ar[r] \ar[d]^{\phi_{k-1}\left\lvert_{PMCG(K;k-1)}\right.} & 1\\
1\ar[r]&\ker{(\til{q}_{2k})_{\#}}\ar[r] & \pi_{0}\topp^{+}_{\til{F}}(T;2k) \ar[r]_(0.45){(\til{q}_{2k})_{\#}} & \pi_{0}\topp^{+}_{\til{F}}(T;2(k-1)) \ar[r] & 1,}
\end{gathered}
\end{equation}
the second arrow in each row being inclusion. We claim that $\phi_{k}\left\lvert_{\ker{(q_{k})_{\#}}}\right.$ is injective. This being the case, if $\phi_{k-1}\left\lvert_{PMCG(K;k-1)}\right.$ is injective then $\phi_{k}\left\lvert_{PMCG(K;k)}\right.$ is also injective by the commutativity and exactness of the rows of~(\ref{eq:mcgdiag4}). By induction on $k$, to complete the proof in the case $g=2$, it will thus suffice to prove that the homomorphism $\phi_{1} \colon MCG(K;1) \to MCG(T;2)$ is injective, which we shall do shortly  (note that $PMCG(K;1)=MCG(K;1)$, so we may remove the restriction symbol from $\phi_{1}$). %\comm{it was required above that $k\geq 2$ so technically we cannot put $k=1$} \comdj{Yes, but we used $k\geq 2$ above to obtain the diagram~(\ref{eq:mcgdiag4}). Note that if $k=2$ then $k-1=1$, so by induction, the $5$-Lemma and the claim, it suffices to prove injectivity for the smallest value of $k-1$, which is $1$. So we believe that what is written here is correct.}\comm{yes, my mistake} 
We first prove the claim. Let $y\in \ker{(q_{k})_{\#}}$ be such that $\phi_{k}(y)=1$. From the isomorphism~(\ref{eq:iso2}), there exists a unique $x\in \ker{(p_{k})_{\#}}$ such that $\partial_{k}(x)=y$. By~(\ref{eq:lastcolumn}), we have $(\til{p}_{2k})_{\#}\circ (\widehat{\varphi}_{k})_{\#}(x)= (\widehat{\varphi}_{k-1})_{\#} \circ (p_{k})_{\#}(x)=1$, so $(\widehat{\varphi}_{k})_{\#}(x)\in \ker{(\til{p}_{2k})_{\#}}$. On the other hand, $\til{\partial}_{2k}\circ (\widehat{\varphi}_{k})_{\#} (x)=\phi_{k}\circ \partial_{k}(x)=\phi_{k}(y)=1$ by commutativity of the diagram~(\ref{eq:bigcommdiagexact}), hence $(\widehat{\varphi}_{k})_{\#} (x)\in \ker{\til{\partial}_{2k}}$. So $(\widehat{\varphi}_{k})_{\#} (x)=1$ by the isomorphism~(\ref{eq:iso1}). But $(\widehat{\varphi}_{k})_{\#}$ is the restriction of $\varphi_{k\#}$ to $P_{k}(K)$, so is injective~\cite{GoncalvesGuaschi12}. Thus $x=1$, $y=1$, and therefore $\phi_{k}\left\lvert_{\ker{(q_{k})_{\#}}}\right.$ is injective as claimed.

It thus remains to show that the homomorphism $\phi_{1} \colon MCG(K;1) \to MCG(T;2)$ is injective. Suppose that $w \in \ker{\phi_{1}}$. Taking $k=1$, the second and third rows of diagram~(\ref{eq:bigcommdiagexact}) become:
\begin{equation}\label{eq:mcgdiag2}
\begin{gathered}
\xymatrix@1{1\ar[r] & Z(P_1(K)) \ar[r]^(0.55){(\delta_{1})_{\#}} \ar[d]^{\Phi_{1}} & P_1(K)\ar[r]^(.38){\partial_{1}} \ar[d]^{\varphi_{1\#}} & MCG(K;1)\ar[r]^(.54){\tau_{1}}\ar[d]^{\phi_{1}} & {\underbrace{MCG(K)}_{\mathbb{Z}_{2}\oplus \mathbb{Z}_{2}}} \ar[d]^(.6){\phi}\ar[r]&1\\ 
1\ar[r]& Z(P_2(T)) \ar[r]^(0.55){(\til{\delta}_{2})_{\#}} & B_{2}(T)\ar[r]^(.38){\til{\partial}_{2}} & MCG(T;2)\ar[r]^(.52){\til{\tau}_{2}} &
{\underbrace{MCG(T)}_{\operatorname{\text{SL}}(2,\mathbb{Z})}} \ar[r] &1,}
\end{gathered}
\end{equation}
%\comm{do you mean that if $w\in \ker \phi_{1}$ then $\tau_{1}(w)\in \ker{\phi}$?} \comdj{Yes, but isn't that what is written?} 
Since $\phi\circ \tau_{1}(w)=\til{\tau}_{2}\circ \phi_{1}(w)=1$, we see that $\tau_{1}(w)\in \ker{\phi}$, and so by Remark~\ref{rem:notinjective}, $\tau_{1}(w)$ is either equal to the mapping class of the identity of $MCG(K)$, in which case we say that $w$ is of \emph{type one}, or is equal to the mapping class of the automorphism 
$\begin{cases}
\alpha \longmapsto \alpha\\
\beta \longmapsto \alpha \beta,
\end{cases}$
in which case we say that $w$ is of \emph{type two}. Recall that $P_1(K)=\left\langle \alpha,  \beta \,\left\lvert\, \alpha \beta\alpha\beta^{-1}\right.\right\rangle$ and $Z(P_1(K))=\mathbb{Z}(\beta^2)$, and every element of $P_1(K)$ may be written (uniquely) in the form $\alpha^{r}\beta^{s}$, where $r,s\in \mathbb{Z}$. By the Appendix, $B_{2}(T)=\left\langle x,y,a,b,\sigma \right\rangle$ and $Z(P_1(T))=\mathbb{Z}(a) \oplus \mathbb{Z}(b)$, where the given generators of $B_{2}(T)$ are subject to the relations of Theorem~\ref{presfull}. The group $B_{2}(T)$ contains
\begin{equation}\label{eq:decompP2T}
P_{2}(T)=\mathbb{F}_2(x,y) \oplus\mathbb{Z}(a)\oplus\mathbb{Z}(b)
\end{equation}
as an index two subgroup, where $\mathbb{F}_2(x,y)$ is the free group on $\{ x,y\}$. The homomorphism $\varphi_{1\#}$  is given by  $\varphi_{1\#}(\alpha)=a^{-1}x^{2}$ and $\varphi_{1\#}(\beta)=y \sigma^{-1}$. Using the relations of Theorem~\ref{presfull}, one may check that $\varphi_{1\#}(\beta^{2})=b$.

First assume that $w\in \ker{\phi_1}$ is of type one.
%\comj{this has been completed.} 
Then $w\in \ker{\tau_{1}}$, and by exactness of the first row of~(\ref{eq:mcgdiag2}), there exists $w'\in P_1(K)$ such that $\partial_{1}(w')=w$, and so $w'=\alpha^{r}\beta^{s}$, where $r,s\in \mathbb{Z}$ are unique. If $w'\in \left\langle\beta^{2} \right\rangle$ then $\partial_{1}(w')=1$ by exactness of the first row of~(\ref{eq:mcgdiag2}), and the conclusion clearly holds. So suppose that either $s$ is odd or $r\neq 0$. Then
\begin{equation}\label{eq:phiwprime}
\varphi_{1\#}(w')=(a^{-1}x^{2})^{r} (y \sigma^{-1})^{s}=\begin{cases}
x^{2r} a^{-r} b^{s/2} & \text{if $s$ is even}\\
x^{2r} y a^{-r} b^{(s-1)/2}  \sigma^{-1} & \text{if $s$ is odd,}
\end{cases}
\end{equation}
using the fact that $Z(B_{2}(T))=\left\langle a,b \right\rangle$. Since $(\til{\delta}_{2})_{\#}\circ \varphi_{1\#}(w')= \phi_{1}\circ \partial_{1}(w')= \phi_{1}(w)=1$, by observing the induced permutation of $\varphi_{1\#}(w')$, we conclude that $s$ must be even, so $\varphi_{1\#}(w') \in P_{2}(T)$, and that $\varphi_{1\#}(w')\in \left\langle a,b \right\rangle$ by exactness of the second row of~(\ref{eq:mcgdiag2}). Using the decomposition~(\ref{eq:decompP2T}), it follows that $r=0$, which yields a contradiction. So if $w\in \ker{\phi_1}$ is of type one then $w=1$. Now suppose that $w\in \ker{\phi_1}$ is of type two.
Consider the basepoint-preserving homeomorphism $h$ of $K$ illustrated in Figure~\ref{fig:homeoh}. Observe that $\tau_{1}([h])=\tau_{1}(w)$, so by exactness of the first row of~(\ref{eq:mcgdiag2}), there exists $w'\in P_{1}(K)$ such that $w=[h]\,\partial_{1}(w')$. Let $r,s\in \mathbb{Z}$ be such that $w'=\alpha^{r}\beta^{s}$.
 \begin{figure}[h]%[!h]
\hfill
\begin{tikzpicture}[>=stealth',semithick,auto]
      \draw[->] (0,0) -- (1,0);
      \draw (1,0) -- (2,0);
       \draw[->] (0,2) -- (1,2);
      \draw (1,2) -- (2,2);
      \draw[->] (0,0) -- (0,1);
      \draw (0,1) -- (0,2);
      \draw[->] (2,2) -- (2,1);
      \draw (2,1) -- (2,0);
      \path (2,1) node (a) [right] {$\alpha$}
   (1,0) node (b) [below] {$\beta$}
            (1,2) node (c) [above] {$\beta$}
            (0,1) node (d) [left] {$\alpha$};
     \end{tikzpicture}
  \raisebox{8.5ex}{$\quad \xrightarrow{\phantom{hhhh} h \phantom{hhhh}}$\quad}
\begin{tikzpicture}[>=stealth']
      \draw[->] (0,0) -- (1,0);
      \draw (1,0) -- (2,0);
       \draw[->] (0,2) -- (1,2);
      \draw (1,2) -- (2,2);
      \draw[->] (0,0) -- (0,1);
      \draw (0,1) -- (0,2);
      \draw[->] (2,2) -- (2,1);
      \draw (2,1) -- (2,0);
      \draw (0,0) -- (2,2);
      \draw[->] (0,0) -- (1,1);
      \draw (1,1) -- (2,2);
      \path (2,1) node (a) [right] {$\alpha$}
   (1,0) node (b) [below] {$\beta$}
            (1,2) node (c) [above] {$\beta$}
            (0,1) node (d) [left] {$h(\alpha)=\alpha$}
            (1.25,1.4) node (e) [left] {$h(\beta)$};
    \end{tikzpicture}\hspace*{\fill}
\caption{The homeomorphism $h$ of $K$.}
\label{fig:homeoh}
\end{figure}
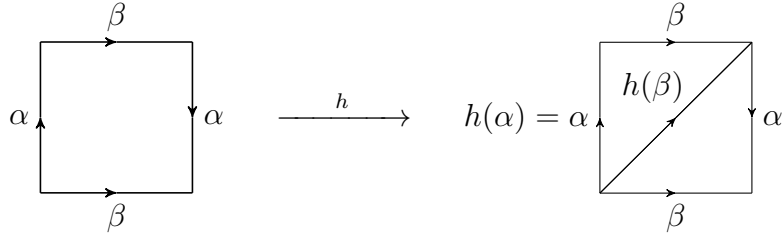
Now $h$ lifts to an orientation-preserving homeomorphism $\til{h}\in \topp^{+}(T,\til{X})$ that is illustrated in Figure~\ref{fig:lifthomeoh}, where $\til{X}=\{ \til{x}_{1}, \til{x}_{2}\}$ is the inverse image of the basepoint of $K$ under $\pi$. It follows that $\til{h}=\rho_{1}(h)$, and hence $[\til{h}]=[\rho_{1}(h)]=\phi_{1}([h])$.
%=\phi_{1}(w)=1$.
%\begin{figure}[h]%[!h]
%\hfill
%     \begin{tikzpicture}[>=stealth',semithick,auto]
%      \draw[->] (0,0) -- (2,0);
%      \draw (2,0) -- (4,0);
%       \draw[->] (0,2) -- (2,2);
%      \draw (2,2) -- (4,2);
%      \draw[->] (0,0) -- (0,1);
%      \draw (0,1) -- (0,2);
%      \draw[->] (4,0) -- (4,1);
%      \draw (4,1) -- (4,2);
%%      \draw[->] (0,0) -- (1,1);
%  %    \draw (1,1) -- (2,2);
%  %    \draw[->] (2,2) -- (3,1);
%%      \draw (3,1) -- (4,0);
%      \path (4,1) node (a) [right] {$a$}
%   (2,0) node (b) [below] {$b$}
%            (2,2) node (c) [above] {$b$}
%            (0,1) node (d) [left] {$a$};
%      %      (2,1) node (e) {$h(\beta)$};
%     \end{tikzpicture}
%\raisebox{8ex}{\quad$\xrightarrow{\phantom{hhhh} \til{h} \phantom{hhhh}}$\quad}
%    \begin{tikzpicture}[>=stealth',semithick,auto]
%      \draw[->] (0,0) -- (2,0);
%      \draw (2,0) -- (4,0);
%       \draw[->] (0,2) -- (2,2);
%      \draw (2,2) -- (4,2);
%      \draw[->] (0,0) -- (0,1);
%      \draw (0,1) -- (0,2);
%      \draw[->] (4,0) -- (4,1);
%      \draw (4,1) -- (4,2);
%      \draw[->] (0,0) -- (1,1);
%      \draw (1,1) -- (2,2);
%      \draw[->] (2,2) -- (3,1);
%      \draw (3,1) -- (4,0);
%      \path (4,1) node (a) [right] {$a$}
%   (2,0) node (b) [below] {$b$}
%            (2,2) node (c) [above] {$b$}
%            (0,1) node (d) [left] {$\til{h}(a)=a$}
%            (2,1) node (e) {$\til{h}(b)$};
%     \end{tikzpicture}
%\hspace*{\fill}
%\caption{A lift $\til{h}$ to $T$ of the homeomorphism $h$.}
%\label{fig:lifthomeoh}
%\end{figure}
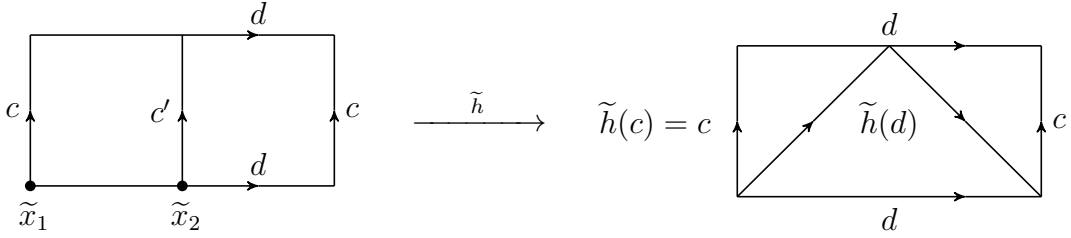
\begin{figure}[h]%[!h]
\hfill
     \begin{tikzpicture}[>=stealth',semithick,auto]
      \draw[->] (0,0) -- (3,0);
      \draw (3,0) -- (4,0);
       \draw[->] (0,2) -- (3,2);
      \draw (3,2) -- (4,2);
      \draw[->] (0,0) -- (0,1);
      \draw (0,1) -- (0,2);
      \draw[->] (4,0) -- (4,1);
      \draw (4,1) -- (4,2);
            \draw[->] (2,0) -- (2,1);
      \draw (2,1) -- (2,2);
%      \draw[->] (0,0) -- (1,1);
  %    \draw (1,1) -- (2,2);
  %    \draw[->] (2,2) -- (3,1);
%      \draw (3,1) -- (4,0);
\draw[fill] (0,0) circle [radius=0.06];
\draw[fill] (2,0) circle [radius=0.06];
      \path (4,1) node (a) [right] {$c$}
   (3,0) node (b) [above] {$d$}
            (3,2) node (c) [above] {$d$}
            (0,1) node (d) [left] {$c$}
            (2,1) node (g) [left] {$c'$}
            (0.4,-0.4) node (e) [left] {$\til{x}_{1}$}
            (2.4,-0.4) node (f) [left] {$\til{x}_{2}$};
      %      (2,1) node (e) {$h(\beta)$};
     \end{tikzpicture}
\raisebox{8ex}{\quad$\xrightarrow{\phantom{hhhh} \til{h} \phantom{hhhh}}$\quad}
    \begin{tikzpicture}[>=stealth',semithick,auto]
      \draw[->] (0,0) -- (3,0);
      \draw (3,0) -- (4,0);
       \draw[->] (0,2) -- (3,2);
      \draw (3,2) -- (4,2);
      \draw[->] (0,0) -- (0,1);
      \draw (0,1) -- (0,2);
      \draw[->] (4,0) -- (4,1);
      \draw (4,1) -- (4,2);
      \draw[->] (0,0) -- (1,1);
      \draw (1,1) -- (2,2);
      \draw[->] (2,2) -- (3,1);
      \draw (3,1) -- (4,0);
      \path (4,1) node (a) [right] {$c$}
   (2,0) node (b) [below] {$d$}
            (2,2) node (c) [above] {$d$}
            (-0.2,1) node (d) [left] {$\til{h}(c)=c$}
            (2,1) node (e) {$\til{h}(d)$};
     \end{tikzpicture}
\hspace*{\fill}
\caption{A lift $\til{h}$ to $T$ of the homeomorphism $h$.}
\label{fig:lifthomeoh}
\end{figure}
%\comj{This has been rewritten using Daciberg's suggestion:}
 On the other hand, relative to the chosen generators of $P_{2}(T)$, the generator $x$ is represented by the loop $c$ based at $\til{x}_{1}$, and $a$ is represented by two loops, namely $c$ and a parallel copy $c'$ of $c$ based at $\til{x}_{2}$. The loop $c'$ represents the element $x^{-1}a$ of $P_{2}(T)$. Using the definition of $\til{\partial}_{2}$, we must find a homotopy $H$ of $T$ starting at the identity, \emph{i.e.} $H(\cdot , 0)=\operatorname{\text{Id}}_{T}$, for which the evaluation of the homotopy at the points $x_1$ and $x_2$ yields the braid $x^{-1}a$. Let $H$ be the homotopy obtained by pushing $\til{x}_{2}$ along the loop $c'$ and that satisfies $H(\cdot , 1)=\til{h}$. This homotopy clearly has the desired trace (or evaluation), and so $\til{\partial}_{2}(x^{-1}a)=[\til{h}]$. Since $w\in \ker{\phi_{1}}$, we have:
\begin{equation*}
1= \phi_{1}(w)= \phi_{1}([h]\,\partial_{1}(w'))= [\til{h}] \, \til{\partial}_{2}\circ \varphi_{1\#}(w')= \til{\partial}_{2}(x^{-1}a \varphi_{1\#}(w')),
\end{equation*}
by commutativity of the diagram~(\ref{eq:mcgdiag2}), where $\varphi_{1\#}(w')$ is given by equation~(\ref{eq:phiwprime}). Once more, by considering the induced permutation of $x^{-1}a \varphi_{1\#}(w')$, $s$ must be even, and so $\til{\partial}_{2}(x^{2r-1} a^{-r+1} b^{s/2})=1$, where $x^{2r-1} a^{-r+1} b^{s/2}\in P_{2}(T)$ is in the normal form of equation~(\ref{eq:decompP2T}). Since $r\in \mathbb{Z}$, we thus obtain a contradiction. We conclude that $\phi_{1}\left\lvert_{MCG(K;1)}\right.$ is injective, and this completes the proof of the theorem.\qedhere
\end{enumerate}
\end{proof}

\section{Cohomological aspects of mapping class groups of punctured surfaces}\label{sec:vcd}

In this section, we study the (virtual) cohomological dimension
%, and the ($p$-) periodicity \comj{maybe} 
of surface braid groups and mapping class groups with marked points, and we prove Theorem~\ref{th:cdbraid} and Proposition~\ref{prop:vcdmap}. 
\begin{proof}[Proof of Theorem~\ref{th:cdbraid}]
Let $g,k\geq 1$, and let $S=S_{g}$ or $N_{g+1}$. Since $P_{k}(S)$ is of (finite) index $k!$ in $B_{k}(S)$, and $F_{k}(S)$ is a finite-dimensional CW-complex and an Eilenberg-Mac~Lane space of type $K(\pi,1)$~\cite{FaN}, the cohomological dimensions of $P_{k}(S)$ and $B_{k}(S)$ are finite and equal, so it suffices to determine $\operatorname{\text{cd}}(P_{k}(S))$. Let us prove by induction on $k$ that $\operatorname{\text{cd}}(P_{k}(S))=k+1$ and that $H^{k+1}(P_{k}(S),\mathbb{Z})\neq 0$.
% \comj{or $\mathbb{Z}_{2}$ in the non-orientable case? }\comm{yes}. 
The result is true if $k=1$ since then $F_{1}(S)=S$, $H^{2}(\pi_{1}(S),\mathbb{Z})\neq 0$ and $\operatorname{\text{cd}}(P_{1}(S))= \operatorname{\text{cd}}(\pi_{1}(S))=2$. So suppose that the induction hypothesis holds for some $k\geq 1$. The Fadell-Neuwirth fibration $p\colon F_{k+1}(S)\to F_{k}(S)$ given by forgetting the last coordinate gives rise to the following short exact sequence of braid groups:
\begin{equation}\label{fngamma}
1\to N \to P_{k+1}(S) \stackrel{p_{\#}}{\to} P_{k}(S) \to 1,
\end{equation}
where $N=\pi_{1}(S\setminus \{ x_{1},\ldots, x_{k}\},x_{k+1})$, $(x_{1},\ldots, x_{k})$ being an element of $F_{k}(S)$, and $p_{\#}$ is defined geometrically by forgetting the last string. Since $S\setminus \{x_1,\ldots,x_k\}$ has the homotopy type of a bouquet of circles, $H^i(S\setminus \{x_1,\ldots,x_k\}), A)$ is trivial for all $i\geq 2$ and for any choice of local coefficients $A$, and $H^1(S\setminus \{x_1,\ldots,x_k\}), \mathbb{Z})\neq 0$, hence $\operatorname{\text{cd}}(N)=1$. By~\cite[Chapter~VIII]{Br}, it follows that $\operatorname{\text{cd}}(P_{k+1}(S))\leq \operatorname{\text{cd}}(P_{k}(S))+\operatorname{\text{cd}}(N) \leq k+2$. To conclude the proof of the theorem, it remains to show that there exist local coefficients $A$ such that $H^{k+2}(P_{k+1}(S), A)\neq 0$. We will show that this is the case for $A=\mathbb{Z}$. By the induction hypothesis, we have $H^{k+1}(P_{k}(S), \mathbb{Z})\neq 0$. Consider the Serre spectral sequence with integral coefficients associated to the fibration $p$. Then we have that:
\begin{equation*}
E^{p,q}_2= H^{p}\bigl(P_{k}(S), H^q(S\setminus \{ x_{1},\ldots, x_{k}\})\bigr).
\end{equation*}
Since $\operatorname{\text{cd}}(P_{k}(S))=k+1$ and $\operatorname{\text{cd}}(S\setminus \{ x_{1},\ldots, x_{k}\})=1$ from above, this spectral sequence has two horizontal lines whose possible non-vanishing terms occur for $0\leq p \leq k+1$ and $0\leq q \leq 1$. We claim that the group $E^{k+1,1}_2$ is non trivial. To see this, first note that $H^1(S\setminus \{x_1,\ldots,x_k\}), \mathbb{Z})$ is isomorphic to the free Abelian group of rank $r$, where $r=2g+k-1$ if $S=S_{g}$ and $r=g+k$ if $S=N_{g+1}$, so $r\geq 2$, and hence $E^{k+1,1}_2=H^{k+1}\bigl(P_{k}(S), \mathbb{Z}^r\bigr)$, where we identify $\mathbb{Z}^r$ with (the dual of) $N\textsuperscript{Ab}$, the Abelianisation of $N$. The action of $P_{k}(S)$ on $N$ by conjugation induces an action of $P_{k}(S)$ on $N\textsuperscript{Ab}$. Let $H$ be the subgroup of $N\textsuperscript{Ab}$ generated by the elements of the form $\alpha(x)-x$, where $\alpha\in P_{k}(S)$ and $x\in N\textsuperscript{Ab}$ and $\alpha(x)$ represents the action of $\alpha$ on $x$. Then we obtain a short exact sequence $0\to H\to N\textsuperscript{Ab} \to N\textsuperscript{Ab}/H\to 0$ of Abelian groups, and the long exact sequence in cohomology applied to $P_{k}(S)$ yields:
\begin{equation}\label{eq:lescohom}
\cdots \to H^{k+1}(P_{k}(S), N\textsuperscript{Ab}) \to H^{k+1}(P_{k}(S), N\textsuperscript{Ab}/H) \to H^{k+2}(P_{k}(S), H)\to \cdots.
\end{equation}
The last term is zero since $\operatorname{\text{cd}}(P_{k}(S))=k+1$, and so the map between the remaining two terms is surjective. So to prove that $E^{k+1,1}_2$ is non trivial, it suffices to show that $H^{k+1}(P_{k}(S), N\textsuperscript{Ab}/H)$ is non trivial. To do so, we first determine $N\textsuperscript{Ab}/H$. Suppose first that $S=S_{g}$, and consider the presentation of $P_{k}(S)$ on $N\textsuperscript{Ab}$ given in~\cite[Corollary~8]{GGorientable}. With the notation of that paper, a basis $\mathcal{B}$ of $N$ is given by:
\begin{equation*}
\left\{ \left. \rho_{k+1,r},\, \tau_{k+1,r},\, C_{i,k+1} \right\rvert 1\leq r\leq g,\, 1\leq i\leq k-1 \right\},
\end{equation*}
and a set of coset representatives in $P_{k+1}(S)$ of a generating set $\mathcal{S}$ of $P_{k}(S)$ is given by $\left\{ \left. \rho_{m,r},\, \tau_{m,r},\, C_{i,j} \right\rvert 1\leq r\leq g,\, 1\leq i< j\leq k,\, 1\leq m\leq k \right\}$. Using~\cite[Table~1 and Theorem~7 or Corollary~8]{GGorientable}, one sees that the commutators of the elements of $\mathcal{S}$ with those of $\mathcal{B}$ project to the trivial element of $N\textsuperscript{Ab}$, with the exception of $[\rho_{m,r}, \tau_{k+1,r}]$ and $[\tau_{m,r}, \rho_{k+1,r}]$ that project to (the coset of) $C_{m,k+1}$ for all $m=1,\ldots,k-1$ (we take the opportunity here to correct a couple of small misprints in these results: relation~(19) of Theorem~7 should read as in relation~($\text{XII}_{n}$) of Corollary~8; and in Table~1, in each of the three rows, the first occurrence of $j>k$ should read $j<k$). Note that we obtain a similar result for $m=k$, but using the surface relation~\cite[equation~(1)]{GGorientable} in $P_{k+1}(S)$ and taking $i=k+1$ yields no new information. It follows that $H$ is the subgroup of $N\textsuperscript{Ab}$ generated by the $C_{m,k+1}$, where $1\leq m\leq k-1$, and that $N\textsuperscript{Ab}/H$ is the free Abelian group generated by the coset representatives of the $\rho_{k+1,r}$ and $\tau_{k+1,r}$, where $1\leq r\leq g$. In particular, $N\textsuperscript{Ab}/H\cong \mathbb{Z}^{2g}$.
Since the induced action of $P_{k}(S)$ on $N\textsuperscript{Ab}/H$ is trivial, using the induction hypothesis, we conclude that:
\begin{equation*}
H^{k+1}(P_{k}(S), N\textsuperscript{Ab}/H)=\bigl( H^{k+1}(P_{k}(S), \mathbb{Z})\bigr)^{2g}\neq 0.
\end{equation*}
It then follows from~(\ref{eq:lescohom}) that $E^{k+1,1}_2=H^{k+1}(P_{k}(S), N\textsuperscript{Ab})\neq 0$. Since  $E^{p,q}_2=0$ for all $p>k+1$ and $q>1$, we have $E^{k+1,1}_2=E^{k+1,1}_{\infty}$, thus $E^{k+1,1}_{\infty}$ is non trivial, and hence $H^{k+2}(P_{k+1}(S), \mathbb{Z})\neq 0$. This proves the result in the orientable case.

Now let us turn to the non-orientable case. The idea of the proof is the same as in the orientable case, but the computations for $N\textsuperscript{Ab}$ and $H$ are a little different. We use the presentation of $P_{k}(S)$ given in~\cite{GGnonorientable}. With the notation of that paper, a basis $\mathcal{B}$ of $N$ is given by $\left\{ \left. \rho_{k+1,r},\, B_{i,k+1} \right\rvert 1\leq r\leq g,\, 1\leq i\leq k-1 \right\}$, and a set of coset representatives in $P_{k+1}(S)$ of a generating set $\mathcal{S}$ of $P_{k}(S)$ is given by $\left\{ \left. \rho_{m,r},\, B_{i,j} \right\rvert 1\leq r\leq g,\, 1\leq i< j\leq k,\, 1\leq m\leq k \right\}$. Using~\cite[Theorem~3]{GGnonorientable}, one sees that the commutators of the elements of $\mathcal{S}$ with those of $\mathcal{B}$ project to the trivial element of $N\textsuperscript{Ab}$, with the exception of $[\rho_{m,r}, \rho_{k+1,r}]$ that projects to (the coset of) $B_{m,k+1}^{-1}$ for all $m=1,\ldots,k-1$ and $1\leq r \leq g$. We obtain a similar result for $m=k$, and using the surface relation~\cite[relation~(c), Theorem~3]{GGnonorientable} in $P_{k+1}(S)$ and taking $i=k+1$ implies that the element $2\sum_{l=1}^{g} \rho_{k+1,l}$ also belongs to $H$ using additive notation. It follows that $H$ is the subgroup of $N\textsuperscript{Ab}$ generated by the $B_{m,k+1}$, where $1\leq m\leq k-1$, and the element $2\sum_{l=1}^{g} \rho_{k+1,l}$. We conclude that $N\textsuperscript{Ab}/H\cong \mathbb{Z}^{g-1}\oplus \mathbb{Z}_{2}$. The argument then goes through as in the orientable case. 
\end{proof}

The exact sequences given by~(\ref{eq:mcgdiag}) and~(\ref{eq:bigcommdiagexact}) also allow us to obtain information about the virtual cohomological dimension of $MCG(N_{g};k)$ if $g\geq 2$ and $k\geq 1$.

\begin{proof}[Proof of Proposition~\ref{prop:vcdmap}]\mbox{}
\begin{enumerate}[(a)]
\item   Since $PMCG(K;k)$ is a subgroup of finite index of  $MCG(K)$, it suffices to show  that   
$\operatorname{\text{vcd}}(PMCG(K;k))=k$. From the  exact sequence given by the third row of~(\ref{eq:bigcommdiagexact}):
\begin{equation*}
1 \to  \pi_{1}\topp(K) \xrightarrow{(\delta_{k})_{\#}}   P_{k}(K) \stackrel{\partial_{k}}{\to}   PMCG(K;k)  \stackrel{\tau_{k}}{\to} MCG(K) \to  1,
\end{equation*}
since $MCG(K)$ is finite, $PMCG(K;k)$ has the same vcd as the quotient $P_{k}(K)/Z(P_{k}(K))$, 
where $ \pi_{1}\topp(K)\cong Z(P_{k}(K))$. We now prove the result by induction on $k$. First, the result is 
true for $k=1$ since $P_{1}(K)/Z(P_{1}(K))\cong \mathbb{Z}\oplus\mathbb{Z}_2$. Now suppose that the result holds for some $k\geq 1$. 
Since the projection $P_{k+1}(K) \to  P_{k}(K)$ maps the centre $Z(P_{k+1}(K))$ of $P_{k+1}(K)$ isomorphically onto the centre $Z(P_{k}(K))$ of $P_{k}(K)$, we have the following short exact sequence of groups:
\begin{equation*}
1\to  P_{1}(K\setminus\{x_0\}) \to    P_{k+1}(K)/Z(P_{k+1}(K)) \to P_{k}(K)/Z(P_{k}(K))  \to 1.
\end{equation*}
Now $\operatorname{\text{vcd}}(P_{k}(K)/Z(P_{k}(K)))=k$ by induction, hence $\operatorname{\text{vcd}}(P_{k+1}(K)/Z(P_{k+1}(K))\leq k+1$, because $\operatorname{\text{vcd}}(\mathbb{Z})=1$. Using the short exact sequence 
\begin{equation*}
1\to  Z(P_{k+1}(K)) \to    P_{k+1}(K) \to   P_{k+1}(K)/Z(P_{k+1}(K))   \to 1,
\end{equation*}
and the fact that $\operatorname{\text{cd}}(P_{k+1}(K))=k+2\leq 1+\operatorname{\text{vcd}}(P_{k+1}(K)/Z(P_{k+1}(K))$ by Theorem~\ref{th:cdbraid}, we see that $\operatorname{\text{vcd}}( P_{k+1}(K)/Z(P_{k+1}(K))\geq k+1$, so $\operatorname{\text{vcd}}(P_{k+1}(K)/Z(P_{k+1}(K))= k+1$, and the result follows.
 
% By Theorem~\ref{th:cdbraid}, $\operatorname{\text{cd}}(P_k(K))=k+1$. We then consider the central extension
% $1\to Z(P_{k}(K)) \to P_{k}(K) \to P_{k}(K)/Z(P_{k}(K))\to 1$, and use the fact that $Z(P_{k}(K))\cong \mathbb{Z}$. %Probably, we will take a torsion-free subgroup of $P_{k}(K)/Z(P_{k}(K))$, and consider its inverse image in $P_{k}(K)$

%1 \ar[r] & \pi_{1}\topp(K) \ar[r]^-{(\delta_{k})_{\#}} \ar[u]_-{\Phi_{1}} \ar@{=}[d] & P_{k}(K) \ar[r]^-{\partial_{k}} \ar[u]_-
%{(\widehat{\varphi}_{k})_{\#}} \ar[d]^-{(p_{k})_{\#}} & PMCG(K;k) \ar[r]^-{\tau_{k}} \ar[d]^-{(q_{k})_{\#}} \ar[u]_-{\phi_{k}\left
%\lvert_{PMCG(K;k)}\right.} & MCG(K) \ar[r] \ar@{=}[d] \ar[u]_-{\phi} & 1\\

%\comj{To be completed by Daciberg. We shall use the exact sequence given by the third row %of~(\ref{eq:bigcommdiagexact}). The idea is that since $MCG(K)$ is finite, $PMCG(K;k)$ has the same vcd as the %quotient $P_{k}(K)/Z(P_{k}(K))$. By Theorem~\ref{th:cdbraid}, $\operatorname{\text{cd}}(P_k(K))=k+1$. We then %consider the central extension $1\to Z(P_{k}(K)) \to P_{k}(K) \to P_{k}(K)/Z(P_{k}(K))\to 1$, and use the fact that $Z(P_{k}%(K))\cong \mathbb{Z}$. Probably, we will take a torsion-free subgroup of $P_{k}(K)/Z(P_{k}(K))$, and consider its inverse %image in $P_{k}(K)$.}
\item Let $g\geq 3$. Consider the short exact sequence:
\begin{equation*}
1\to B_k(N_{g})\to  MCG(N_{g};k)\to MCG(N_{g})\to 1 
\end{equation*}
given by equation~(\ref{eq:mcgdiag}). Then by~\cite[Corollary~2.2]{HT09} and Theorem~\ref{th:cdbraid}, we have:
\begin{equation*}
\operatorname{\text{vcd}}(MCG(N_g;k))\leq \operatorname{\text{cd}}(B_k(N_g))+\operatorname{\text{vcd}}(MCG(N_g))\leq k+1+4g-9=4g+k-8
\end{equation*}
as required.\qedhere
\end{enumerate}
\end{proof}

\appendix

\section*{Appendix}

\setcounter{theorem}{0}
\renewcommand{\thetheorem}{A.\arabic{theorem}}

In this appendix, we provide presentations of $P_2(T)$ and $B_2(T)$ that are adapted to our situation. From~\cite[Section 4]{FH}, we have:

\begin{theorem}\label{thm:presP2T}
The group $P_2(T)$ possesses a presentation with generators $ B_{1,2}$, $\rho_{1,1}$, $\rho_{1,2}$, $\rho_{2,1}$ and $\rho_{2,2}$ subject to the following relations:
\begin{enumerate}[(a)]
\item $[\rho_{1,1}, \rho_{1,2}^{-1}]=[\rho_{2,1}, \rho_{2,2}^{-1}]=B_{1,2}$.
\item $ \rho_{2,1}\rho_{1,1} \rho^{-1}_{2,1} =  B_{1,2}\rho_{1,1} B_{1,2}^{-1}$.
\item $ \rho_{2,1}\rho_{1,2}\rho_{2,1}^{-1} = B_ {1,2}\rho_{1,2}[\rho_{1,1}^{-1}, B_{1,2}]$.
%\item $ \rho_{2,1} B_{1,2}\rho_{2,1}^{-1} = B_ {1,2}\rho_{1,1}^{-1}B_{1,2}\rho_{1,1}B_{1,2}^{-1}$ 
\item $ \rho_{2,2}\rho_{1,1} \rho^{-1}_{2,2} = \rho_{1,1} B_{1,2}^{-1}$.
\item $ \rho_{2,2}\rho_{1,2}\rho_{2,2}^{-1} = B_ {1,2}\rho_{1,2}B_{1,2}^{-1}$.
%\item $ \rho_{2,2} B_{1,2}\rho_{2,2}^{-1} = B_ {1,2}\rho_{1,2}^{-1}B_{1,2}\rho_{1,2}B_{1,2}^{-1}$ 
\end{enumerate}
\end{theorem}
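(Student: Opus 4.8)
The plan is to derive the presentation from the Fadell--Neuwirth fibration~\cite{FaN} of the torus together with the standard method for presenting a group extension. Forgetting the first coordinate gives the fibration $p\colon F_2(T)\ra T$, $(x_1,x_2)\mapsto x_2$, whose fibre over $x_2$ is the once-punctured torus $T\setminus\{x_2\}$, in which the first strand moves. Since $T$ and the punctured torus are aspherical, the long exact sequence in homotopy collapses to the short exact sequence
\begin{equation*}
1\ra \pi_1(T\setminus\{x_2\})\ra P_2(T)\stac{p_\#}\pi_1(T)\ra 1 .
\end{equation*}
Here the kernel is the free group on the two generators $\rho_{1,1},\rho_{1,2}$ given by the loops of the first strand around the two handles, and the quotient is $\pi_1(T)\cong\bz^2$, generated by the images of the second strand and lifted back to $\rho_{2,1},\rho_{2,2}$. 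The boundary curve of the punctured fibre is the commutator $[\rho_{1,1},\rho_{1,2}^{-1}]$ and, as the loop in which the first strand encircles the second, it is exactly $B_{1,2}$; this gives the first equality of relation~(a).

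I would then apply the standard method for presenting an extension $1\ra K\ra G\ra Q\ra 1$: the generators of $G=P_2(T)$ are those of the free kernel $K$, namely $\rho_{1,1},\rho_{1,2}$, together with chosen lifts $\rho_{2,1},\rho_{2,2}$ of the generators of $Q=\pi_1(T)$, and $B_{1,2}$ is introduced through its defining expression above. The relations are of three types. Since $K$ is free, it contributes none beyond this definition. The single torus relation $[\rho_{2,1},\rho_{2,2}^{-1}]=1$ of $Q$ lifts to $[\rho_{2,1},\rho_{2,2}^{-1}]=w$ for some $w\in K$, and identifying $w$ with the puncture loop yields the second equality of relation~(a). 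Finally, conjugation by each lift $\rho_{2,i}$ preserves the fibre group, and expressing $\rho_{2,i}\rho_{1,j}\rho_{2,i}^{-1}$ as a word in $\rho_{1,1},\rho_{1,2},B_{1,2}$ produces relations~(b)--(e).

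The heart of the matter, and the step I expect to be the main obstacle, is the explicit determination of these four conjugation words, which amounts to computing the monodromy action of $\pi_1(T)$ on the fibre group. Dragging the second strand once around a handle curve of $T$ induces a point-pushing homeomorphism of the once-punctured fibre, and one must follow its effect on each of $\rho_{1,1}$ and $\rho_{1,2}$ while keeping careful track of orientations and of the order in which the handle generators are crossed. This bookkeeping is precisely what produces the asymmetry between conjugation by $\rho_{2,1}$ (relations~(b) and~(c), where the image of $\rho_{1,2}$ acquires the extra factor $[\rho_{1,1}^{-1},B_{1,2}]$) and conjugation by $\rho_{2,2}$ (relations~(d) and~(e)); obtaining these words with the correct signs is the delicate point.

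To finish, I would verify completeness and correctness. The extension method guarantees that relations of the three types listed above form a complete set of defining relations for $P_2(T)$, so it remains only to check that each displayed relation indeed holds in the group; this can be done directly from the geometric representatives, or cross-checked against the specialisation to $n=2$, $g=1$ of the general presentation of orientable surface pure braid groups recorded in~\cite{GGorientable}, which also serves to pin down any sign conventions left ambiguous by the geometric computation.
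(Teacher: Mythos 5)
First, a point of comparison: the paper does not prove Theorem~A.1 at all --- it is quoted from~\cite[Section~4]{FH}. So you are competing with a citation, not with an argument. Your strategy (the Fadell--Neuwirth fibration $F_2(T)\to T$, asphericity of $T$ and of the punctured torus to collapse the long exact sequence to $1\to \pi_1(T\setminus\{x_2\})\to P_2(T)\to \pi_1(T)\to 1$ with free kernel $\langle \rho_{1,1},\rho_{1,2}\rangle$ and quotient $\mathbb{Z}^2$, followed by the presentation-of-extensions method of~\cite{Jo}) is the standard route, and it is the one underlying both the cited result and the general presentations of~\cite{GGorientable}. It also predicts exactly the shape of the statement: one relation lifting $[\rho_{2,1},\rho_{2,2}^{-1}]=1$, four conjugation relations (two quotient generators acting on two kernel generators), and the identification of the lifted relator and of the boundary curve of the fibre with $B_{1,2}$. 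Methodologically there is nothing to object to.

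As a proof, however, it has a genuine gap, which you yourself flag: the entire nontrivial content of the theorem is the explicit right-hand sides of relations~(b)--(e), together with the fact that both commutators in~(a) equal the \emph{same} element $B_{1,2}$, and your proposal never computes any of these words. Observing that $\rho_{2,i}\rho_{1,j}\rho_{2,i}^{-1}$ lies in the kernel establishes only that relations of this general form exist, not that they are the stated ones; the asymmetric factor $[\rho_{1,1}^{-1},B_{1,2}]$ in~(c) versus the clean conjugation in~(e), and the bare $B_{1,2}^{-1}$ in~(d) versus the conjugation in~(b), are precisely what has to be proved, sign conventions included. To close the gap you must actually carry out the monodromy computation --- realise conjugation by $\rho_{2,j}$ as the point-pushing homeomorphism of the once-punctured fibre dragging the puncture around the $j$-th handle curve, and track the images of $\rho_{1,1}$ and $\rho_{1,2}$ in the standard square picture of $T$ --- or else genuinely derive the presentation as the $g=1$, $n=2$ specialisation of~\cite{GGorientable}, with the two sets of conventions matched explicitly. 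Your final paragraph offers the latter only as a ``cross-check'', which amounts to replacing the proof by a citation, i.e.\ exactly what the paper does by quoting~\cite{FH}.
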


If necessary, the generator $B_{1,2}$ may be suppressed from the list of generators of $P_2(T)$. Using Theorem~\ref{thm:presP2T}, we may obtain the following useful relations in $P_2(T)$:
\begin{equation*}
\text{$\rho_{2,1} B_{1,2}\rho_{2,1}^{-1} = B_ {1,2}\rho_{1,1}^{-1}B_{1,2}\rho_{1,1}B_{1,2}^{-1}$ and 
$\rho_{2,2} B_{1,2}\rho_{2,2}^{-1} = B_ {1,2}\rho_{1,2}^{-1}B_{1,2}\rho_{1,2}B_{1,2}^{-1}$.}
\end{equation*}
Setting $\delta_{1,1}=\rho_{1,1}$, $\tau_{1,1}=\rho_{1,2}$, $\delta_{2,1}=B_{1,2}^{-1}\rho_{2,1}$ and $\tau_{2,1}=B_{1,2}^{-1}\rho_{2,2}$, we obtain a new presentation of $P_2(T)$ whose generators are $B_{1,2}$, $\delta_{1,1}$, $\tau_{1,1}$, $\delta_{2,1}$ and $\tau_{2,1}$, that are subject to the following relations:
\begin{enumerate}[(a)]
\item $ [ \delta_{1,1}, \tau^{-1}_{1,1}] = [ B_{1,2}\delta_{2,1}, \tau_{2,1}^{-1}B_{1,2}^{-1}] = B_{1,2}$.
\item $ [\delta_{2,1}, \delta_{1,1}] =[\tau_{2,1}, \tau_{1,1}]= 1$.
\item $\delta_{2,1}\tau_{1,1}\delta^{-1}_{2,1} = \tau_{1,1}\delta^{-1}_{1,1}B_{1,2}\delta_{1,1}$.
\item $\tau_{2,1}\delta_{1,1}\tau^{-1}_{2,1}  = B_{1,2}^{-1}\delta_{1,1}$.
\end{enumerate}
%\end{thm}
% {\bf \{!!!D!!!17JUNE An alternative of notation for the last page 14 starting "Finally.......group extentions [20]."
If we let $a=\delta_{1,1}\delta_{2,1}$  $b=\tau_{1,1}\tau_{2,1}$, $x=\delta_{1,1}$ and $y=\tau_{1,1}$, then it is not hard to show that $P_{2}(T)$ has a presentation whose generators are $x$, $y$, $a$ and $b$ that are subject to the following relations:
\begin{enumerate}[(a)]
\item $ [ a, b] =1$.
\item $ [a , x] =[b , x]=1$.
\item $[a , y] =[b,  y]=1$. 
\end{enumerate}
It follows from this presentation that
% above or \cite{GG11}{\bf !!!D!!!! I must be wrong. The presentation is no longer in this reference} 
$P_2(T)$ is isomorphic to $F_2\oplus \mathbb{Z} \oplus \mathbb{Z}$, where $\{x, y\}$ is a basis of the free group $F_2$, and  
$\{a,  b\}$ is a basis of the free Abelian group $\mathbb{Z}\oplus \mathbb{Z}$. We then obtain the following presentation of $B_2(T)$. 

\begin{theorem}\label{presfull}
The group $B_2(T)$ has a presentation with generators $B=B_{1,2}, \sigma, x, y, a$ and $b$ that are subject to the following relations:
\begin{enumerate}[(a)]
\item $\sigma^2=[x,  y^{-1}]=B$.
\item $[a, b^{-1}] =1$.
\item $ axa^{-1} = x$ and $ aya^{-1} = y$.
%\item $ \hat \delta_{2,1} A_{1,2}\hat \delta_{2,1}^{-1} = A_ {1,2}$ 
\item $ bxb ^{-1} = x$ and $ byb^{-1}=y$.
%\item $ \hat\tau_{2,2}\ A_{1,2} \hat \tau^{-1}_{2,2} =  A_{1,2} $
\item $\sigma x \sigma^{-1}=Bx^{-1}a$ and $\sigma y \sigma^{-1}=By^{-1}b$.
\item $\sigma a \sigma^{-1}=a$ and $\sigma b \sigma^{-1}=b$.
\end{enumerate}
\end{theorem}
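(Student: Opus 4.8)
The plan is to realise $B_{2}(T)$ as an extension of the symmetric group $\Sigma_{2}\cong\mathbb{Z}_{2}$ by the pure braid group $P_{2}(T)$, exploiting the presentation $P_{2}(T)\cong F_{2}(x,y)\oplus\mathbb{Z}(a)\oplus\mathbb{Z}(b)$ established immediately above (these relations are precisely relations~(b)--(d) of the statement). There is a short exact sequence
\begin{equation*}
1\longrightarrow P_{2}(T)\longrightarrow B_{2}(T)\longrightarrow \Sigma_{2}\longrightarrow 1,
\end{equation*}
and I take $\sigma\in B_{2}(T)$ to be the half-twist braid that exchanges the two marked points, so that $\sigma$ projects to the generator of $\Sigma_{2}$ and $\sigma^{2}\in P_{2}(T)$. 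By the standard procedure for building a presentation of a group extension from presentations of the kernel and the quotient, a presentation of $B_{2}(T)$ is obtained from that of $P_{2}(T)$ by adjoining the single generator $\sigma$, retaining the relators of $P_{2}(T)$, adding one relation $\sigma g\sigma^{-1}=\theta(g)$ for each generator $g\in\{x,y,a,b\}$ (where $\theta$ denotes conjugation by $\sigma$, an automorphism of $P_{2}(T)$ since the latter is normal), and adding one relation expressing $\sigma^{2}$ as a word in $x,y,a,b$.

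The proof therefore reduces to two computations: the conjugation automorphism $\theta$, and the element $\sigma^{2}$. For the first, I would argue that $\theta$ fixes $a$ and $b$, which is relation~(f); the quickest justification is that $\langle a,b\rangle=Z(B_{2}(T))$ by~\cite[Proposition~4.2]{PR}, so $\sigma$ commutes with both, and this simultaneously yields the portions of relations~(b)--(d) involving $a$ and $b$. The essential part is the action of $\sigma$ on the loops $x$ and $y$ of the first strand, which I claim is $\theta(x)=Bx^{-1}a$ and $\theta(y)=By^{-1}b$, namely relation~(e). For the second computation, $\sigma^{2}$ is the full twist $B=B_{1,2}$, and by Theorem~\ref{thm:presP2T}(a) we have $B=[x,y^{-1}]$ in $P_{2}(T)$; this is exactly relation~(a), and the generator $B$ is consequently redundant but is retained for readability.

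The main obstacle is the geometric determination of relation~(e), that is, the precise effect of the half-twist $\sigma$ on $x$ and $y$. Exchanging the two strands via the half-twist both reverses these loops (producing the factors $x^{-1}$ and $y^{-1}$) and introduces a full twist $B$ together with a transfer onto the diagonal elements recorded by $a$ or $b$; fixing the exponents correctly and distinguishing the $a$-contribution from the $b$-contribution is the delicate point. I would carry this out by computing $\sigma\rho_{i,j}\sigma^{-1}$ in the explicit generators $\rho_{i,j},B_{1,2}$ of Theorem~\ref{thm:presP2T}, and then rewriting the result using the substitutions $x=\delta_{1,1}=\rho_{1,1}$, $y=\tau_{1,1}=\rho_{1,2}$, $a=\delta_{1,1}\delta_{2,1}$ and $b=\tau_{1,1}\tau_{2,1}$ introduced above.

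Finally, I would record a consistency check that both confirms the computation and verifies that the listed relations are compatible. Since $\sigma^{2}=B$ must act on $P_{2}(T)$ by conjugation as $\theta^{2}$, one checks from~(e)--(f) that $\theta^{2}(g)=BgB^{-1}$ for $g\in\{x,y\}$: using $\theta(B)=B$, $\theta(a)=a$, and the fact that $a$ commutes with $x$ and $B$, one gets $\theta^{2}(x)=\theta(B)\,\theta(x)^{-1}\,\theta(a)=B\,a^{-1}xB^{-1}\,a=BxB^{-1}$, and similarly $\theta^{2}(y)=ByB^{-1}$. This agreement shows that relations~(a)--(f) are mutually consistent and, by the extension recipe, constitute a presentation of $B_{2}(T)$.
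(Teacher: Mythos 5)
Your proposal is correct and follows essentially the same route as the paper's own (deliberately brief) proof: the short exact sequence $1\to P_{2}(T)\to B_{2}(T)\to \mathbb{Z}_{2}\to 1$, the standard presentation-of-extensions result (the paper cites Johnson), the identification $\sigma^{2}=B_{1,2}=[x,y^{-1}]$, and geometric computation of the conjugation action of the Artin half-twist $\sigma=\sigma_{1}$ on the generators of $P_{2}(T)$. Your two additions — invoking $Z(B_{2}(T))=\left\langle a,b\right\rangle$ from~\cite[Proposition~4.2]{PR} to dispose of relation~(f), and the check that $\theta^{2}$ is conjugation by $B$ — are sound and consistent with how the paper uses these facts elsewhere.
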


The generator $\sigma$ given in the statement of Theorem~\ref{presfull} is the Artin generator $\sigma_{1}$. The proof of the theorem is standard, and makes use of the above presentation of $P_{2}(T)$, the short exact sequence:
\begin{equation*}
1\to P_2(T) \to B_2(T) \to \mathbb{Z}_2 \to 1,
\end{equation*}
standard results on presentations of group extensions~\cite[Proposition~1, p.~139]{Jo}, and computations of relations between the given generators by means of geometric arguments.

\end{document}